\newcommand{\mathsym}[1]{{}}
\newcommand{\unicode}[1]{{}}
\theoremstyle{plain}
\newtheorem{theorem}{Theorem}
\newtheorem{corollary}[theorem]{Corollary}
\newtheorem{proposition}[theorem]{Proposition}
\theoremstyle{definition}
\theoremstyle{remark}
\newtheorem{remark}[theorem]{Remark}
\begin{document}

\begin{center}
{\bfseries\Large Lyapunov exponents for some isotropic random matrix ensembles}
\\[2\baselineskip]
P. J. Forrester\footnote{pjforr@unimelb.edu.au} and Jiyuan Zhang\footnote{jiyuanz@student.unimelb.edu.au}%

{\itshape ARC Centre of Excellence for Mathematical and Statistical Frontiers,\\
School of Mathematics and Statistics, The University of Melbourne, Victoria 3010, Australia.}\\

\end{center}

\begin{abstract}
\noindent
A random matrix with rows distributed as a function of their length is said to be isotropic.
When these distributions are Gaussian, beta type I, or beta type II, previous work has,
from the viewpoint of integral geometry, obtained the explicit form of the distribution of the
determinant. We use these result to evaluate the sum of the Lyapunov spectrum of the 
corresponding random matrix product, and we further give explicit expressions for the
largest Lyapunov exponent. Generalisations to the case of complex or quaternion entries
are also given. For standard Gaussian matrices $X$, the full Lyapunov
spectrum for products of random matrices $I_N + {1 \over c} X$ is computed
in terms of a generalised hypergeometric function in general, and in terms of a single
single integral involving a modified Bessel function for the largest Lyapunov exponent.
\end{abstract}

\section{Introduction}\label{s1}

Let $X$ be a real $N \times N$ random matrix. A basic question is to ask about the distribution of
its determinant. Since $|\det X| = (\det X^T X)^{1/2}$, the determinant of the positive definite matrix
$X^T X$ provides a natural extension of this question to the case of $n \times N$ $(n \ge N)$ rectangular
matrices. In the latter setting, thinking of $X$ as a multi-dimensional data matrix, with the
measured quantities corresponding to the columns and each with mean zero, $X^T X$ is up to
proportionality the empirical correlation matrix between these quantities. Moreover, 
$(\det X^T X)^{1/2}$ has the geometrical interpretation as the Hausdorff volume of the parallelotope
generated by the columns of $X$; see e.g.~\cite{Ma99}.

Let $\{\tau_j\}_{j=1}^N$ denote the singular values of $X$. Since $| \det X | = \prod_{l=1}^N \tau_l$, one
approach to this question is to make use of knowledge of the distribution of the singular values.
For example, in the case that $X$ is an $N \times N$ standard Gaussian matrix, it is known
(see e.g.~\cite[Prop.~3.22 with $\beta = 1$, $n=m=N$]{Fo10}) that the joint probability density function
(PDF) for the squared singular values $\lambda_l = \tau_l^2$ is proportional to
\begin{equation}\label{1}
\prod_{l=1}^N \lambda_l^{-1/2} e^{- \lambda_l/2} \chi_{\lambda_l > 0}
\prod_{1 \le j < k \le N} | \lambda_k - \lambda_j |,
\end{equation}
where $\chi_J = 1$ for $J$ true and $\chi_J = 0$ otherwise. Studying the distribution of 
$|\det X|^2 = \prod_{l=1}^N \lambda_l$ through its moments then calls for the evaluation of
\begin{equation}\label{2}
\int_0^\infty d \lambda_1 \cdots \int_0^\infty d \lambda_N \,
\prod_{l=1}^N \lambda_l^{-1/2 + s} e^{-\lambda_l/2} \prod_{1 \le j < k \le N} | \lambda_k - \lambda_j|.
\end{equation}
This is a particular (limiting) Selberg integral (see e.g.~\cite[\S 4.7]{Fo10}), and as such can be
evaluated as a product of gamma functions to give
\begin{equation}\label{2.1a}
\Big \langle \prod_{l=1}^N \lambda_l^s \Big \rangle_X =
\prod_{j=1}^N {2^s \Gamma(s+j/2) \over \Gamma(j/2)}.
\end{equation}
With $\chi_j^2$ denoting the chi-squared distribution with $j$ degrees of freedom, we read off from
this that
$$
\Big \langle \prod_{l=1}^N \lambda_l^s \Big \rangle_X = \prod_{j=1}^N 
\Big \langle  \lambda_j^s \Big \rangle_{\chi_j^2}
$$
and hence (see e.g.~\cite{Mu82})
\begin{equation}\label{3}
|\det X|^2 \mathop{=}\limits^{\rm d} \prod_{j=1}^N \chi_j^2.
\end{equation}

Our focus in the present paper relates to a different viewpoint on (\ref{3}), which in fact can
be traced back to the work of Bartlett \cite{Ba34}. Instead of decomposing $X$ in terms of its
singular values, the strategy is to use the QR (Gram-Schmidt) decomposition in which $Q$ is a real
orthogonal matrix consisting of the Gram-Schmidt basis as its columns, and $R=[r_{jk}]_{j,k=1}^N$
is an upper triangular matrix with positive entries on the diagonal. With $(dY)$ denoting the
product of differentials of the matrix $Y$, the change of variables formula (see e.g.~\cite{Mu82})
\begin{equation}\label{3.1}
(dY) = \prod_{l=1}^N r_{ll}^{N-l} (d R) (Q^T d Q),
\end{equation}
and the facts that $\prod_{j,k=1}^N e^{- x_{jk}^2/2} = \prod_{j \le k}^N
e^{- r_{jk}^2/2}$, imply that each variable $r_{jj}^2$ has distribution $\chi_{N-j+1}^2$. Noting that
\begin{equation}\label{3.1a}
\det X^T X = \prod_{j=1}^N r_{jj}^2
\end{equation}
 then reclaims (\ref{3}).

The technique of 
derivation of (\ref{3}) using the QR rather than singular value decomposition is a more powerful
method, Thus it provides for the exact specification of the distribution of $| \det X |^2$ for
some ensembles of matrices $X$ with distribution invariant under the transformation
\begin{equation}\label{4}
X \mapsto X Q,
\end{equation}
for $Q$ a real orthogonal matrix, but otherwise not a function of $X^T X$. The latter point means 
in particular that such ensembles do not permit
a formula analogous to (\ref{1}) for the PDF of the squared singular values. An example  is the so
called uniform Gram ensemble, in which each row of $X$ is chosen uniformly from the
unit $(N-1)$-sphere. Use of the QR decomposition gives that  \cite{Ro07,TD06}
\begin{equation}\label{5}
| \det X |^2  \mathop{=}\limits^{\rm d} \prod_{l=1}^N  {\rm Beta} \, [(N-l+1)/2,(l-1)/2],
\end{equation}
where ${\rm Beta} \, [\alpha, \beta]$ denotes the beta distribution with parameters $(\alpha, \beta)$.

In Section 2 of this paper results on volumes of random parallelotopes in the integral geometry
literature will be used to specify random matrix ensembles with the invariance (\ref{4}) which
permit results analagous to (\ref{3}) and (\ref{5}). 
The entries of each row of $X$ are drawn from a Gaussian, beta type I, or beta type II distribution dependent only on the length of the row.
We show that these results can be extended to
the case that the matrices in the ensembles have complex, or quaternion, entries rather than being
real valued as in the integral geometry setting. There is an application of these results
to computations relating to the Lyapunov spectrum $\{\mu_j\}_{j=1}^N$ for the random
product matrix
\begin{equation}\label{PX}
P_m = X_m X_{m-1} \cdots X_1,
\end{equation}
where each $X_i$ is chosen independently from an ensemble with the invariance (\ref{4}). The
main point underlying this is that the formula of Oseledec \cite{Os68} and
Raghunathan \cite{Ra79} for the sum of the first $k$ largest exponents
\begin{equation}\label{5.1}
\mu_1 + \cdots + \mu_k = \sup \lim_{m \to \infty} {1 \over m}  \log {\rm Vol}_k \{y_1(m),\dots,y_k(m) \} \qquad (k=1,\dots,d),
\end{equation}
where $y_j(m) := P_m y_j(0)$, the sup operation is over all sets of linearly independent vectors
$\{y_1(0),\dots,y_k(0) \}$ and Vol${}_k$ refers to the volume of the parallelotope generated by the given set of $k$ vectors, simplifies in the setting of the invariance (\ref{4}). Thus, then \cite{CN84,Ne86}
\begin{equation}\label{6}
\mu_1 + \cdots + \mu_k =   \Big \langle \log \det  \Big ( X_{N \times k}^T  X_{N \times k}\Big )^{1/2} \Big 
\rangle,
\end{equation}
where $X_{N \times k}$ is the restriction of $X$ to its first $k$ columns. The results of Section 2 apply directly to the case $k=N$ of (\ref{6}). An analogous study can be carried out for the products
of independent random matrices of the form
\begin{equation}\label{IX}
I_N + {1 \over c} X,
\end{equation}
where $X$ is a standard Gaussian matrix. Although the resulting expression
is computationally more difficult, there is simplification in the limit $c \to \infty$.

In Section 3 we address the problem of using (\ref{IX}) to compute the largest Lyapunov exponent
$\mu_1$ for the ensembles
of Section 2, and give evaluations in terms of a single integral. It is well known
 that for products of standard Gaussian random matrices the RHS of (\ref{5.1}) can
be computed explicitly \cite{Ne86}. We show in Section 4 that for product ensembles based
on (\ref{IX}), an evaluation of (\ref{5.1}) in terms of a generalised hypergeometric function can be
given. For the largest Lyapunov exponent, a much
simpler expression, involving only a single integral of standard special functions, is also obtained.

\setcounter{equation}{0}
\section{Distribution of the determinant for some invariant ensembles}\label{S2}
\subsection{Isotropic rows}
Let the row vector $\mathbf x = (x_1,\dots, x_N)$ be sampled from a distribution with one of the PDFs
\begin{equation}\label{2.1}
\left \{
\begin{array}{ll} (2 \pi \sigma^2)^{-N/2} e^{- | \mathbf x|^2/2 \sigma^2}, \: \mathbf x \in \mathbb R^N
& ({\rm Gaussian}) \\
\pi^{-N/2} (\nu/2)_{N/2} (1 - |\mathbf x|^2 )^{\nu/2 - 1},  \: \mathbf x \in B_N &
({\rm Beta \: type \: I}) \\
\pi^{-N/2} ( \omega /2)_{N/2} (1 + |\mathbf x|^2 )^{-(N+\omega)/2}, \: \mathbf x \in  \mathbb R^N &
({\rm Beta \: type \: II}).
\end{array} \right.
\end{equation}
Here $(c)_n = \Gamma(c+n)/\Gamma(c)$ and $B_N$ denotes the unit ball in
$\mathbb R^N$. We require $\sigma > 0$ in the Gaussian
weight, $\nu > 0$ in the beta type I weight (the limit $\nu \to 0^+$ corresponds to the
uniform Gram ensemble) and $\omega \ge 0$ in the beta type II weight.
Being functions of $|\mathbf x |^2$, each of these has the vector invariance corresponding to
(\ref{4}), $\mathbf x \mapsto \mathbf x R$. In the terminology of \cite{Ru79} the distribution is then
referred to as isotropic.

The Gaussian case is realised by choosing each $x_i$ in $\mathbf x$ to be an independent normal
random variable with standard deviation $\sigma$. For $\nu \in \mathbb N_{0}$, the distribution corresponding
to the beta type I weight can be sampled by choosing a vector uniformly at random on the unit
sphere in $\mathbb R^{N+\nu}$, then restricting to the first $N$ components; this follows from
e.g.~\cite[Eq.~(3.113)]{Fo10}. For $\omega > 0$, the 
distribution corresponding
to the beta type II weight can be sampled by choosing $w  \mathop{=}\limits^{\rm d}
\chi_\omega^2$, and then forming $(1/\sqrt{w})\mathbf v$ where $\mathbf v$ is a
$1 \times N$ row vector of standard Gaussians \cite{Ru79}.

We form three random matrix ensembles by choosing rows independently according to 
distributions corresponding to the three
weights in (\ref{2.1}), allowing for the parameters $\sigma, \nu$ and $\omega$ to be different in the
different rows. 
We remark that in the Gaussian case the random matrix can equivalently be specified as the
product
\begin{equation}\label{GK}
\Sigma^{1/2} G,
\end{equation}
where $\Sigma$ is the diagonal matrix ${\rm diag} \, [\sigma_1^2,\dots, \sigma_N^2]$, and $G$ has
independently distributed standard Gaussian elements.
Results in \cite{Ru79} give the distribution of the determinant and its moments.
In addition to the $\chi^2$ and beta distributions, also required is the beta prime distribution,
to be denoted BetaP[a,b], which is supported on $(0,\infty)$ and has density proportional to
$y^{a-1}/(1+y)^{a+b}$ (see \cite{FI18} for another recent application of the latter in random matrix
theory).

\begin{proposition}\label{R} (Ruben)
For the Gaussian weight \cite[Eqs.~(29), (28)]{Ru79}
\begin{equation}\label{2.2}
\Big \langle | \det X |^{2 \alpha} \Big \rangle =
\prod_{l=1}^N (2 \sigma_l^2)^\alpha \, ((N-l+1)/2)_\alpha,
\end{equation}
and
\begin{equation}
 |\det X |^{2} \mathop{=}\limits^{\rm d}  \prod_{l=1}^N \sigma_l^2
 \chi_l^2
\end{equation}
(cf.~(\ref{2.1}) and (\ref{3});
for the beta type I weight \cite[Eqs.~(36), (35)]{Ru79}
\begin{equation}\label{36}
\Big \langle | \det X |^{2 \alpha} \Big \rangle =
\prod_{l=1}^N {((N-l+1)/2)_\alpha \over ((N + \nu_l)/2)_\alpha}
\end{equation}
and
\begin{equation}
| \det X |^{2 } \mathop{=}\limits^{\rm d}  \prod_{l=1}^N {\rm Beta}[
 (N-l+1)/2,  (\nu_l + l - 1)/2];
\end{equation}
for the beta type II weight \cite[Eqs.~(34), (33)]{Ru79}
\begin{equation}
\langle | \det X |^{2 \alpha} \rangle =
\prod_{l=1}^N ((N-l+1)/2)_\alpha (\omega_l/2)_{-\alpha}
\end{equation}
and
\begin{equation}
 |\det X |^{2 } \mathop{=}\limits^{\rm d}  \prod_{l=1}^N {\rm BetaP}[
  (N-l+1)/2, \omega_l/2].
\end{equation}
\end{proposition}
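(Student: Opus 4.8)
The plan is to treat all three ensembles uniformly by running the Gram--Schmidt procedure on the \emph{rows} of $X$ and exploiting their isotropy, exactly in the spirit of the derivation of (\ref{3}) recalled in the Introduction. Since the rows are drawn independently, it is enough to determine the law of each successive height in the base-times-height formula for the volume of the parallelotope they span, and then to multiply.

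First I would record the geometric decomposition. Writing $\mathbf x_1,\dots,\mathbf x_N$ for the rows, the analogue for rows of (\ref{3.1a}) reads $|\det X|^2 = \prod_{l=1}^N h_l^2$, where $h_l$ is the distance from $\mathbf x_l$ to the linear span of $\mathbf x_1,\dots,\mathbf x_{l-1}$. Conditioning on the first $l-1$ rows, their span is (generically) an $(l-1)$-dimensional subspace, and by the row invariance $\mathbf x_l \mapsto \mathbf x_l Q$ for orthogonal $Q$ I may rotate coordinates so that this subspace consists of the first $l-1$ axes; then $h_l^2$ is the squared length of the orthogonal projection of $\mathbf x_l$ onto the complementary $(N-l+1)$ coordinates.

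The heart of the argument is then a single isotropic projection lemma. Writing $\mathbf x_l = r_l\,\boldsymbol\omega_l$ with $r_l = |\mathbf x_l|$ and $\boldsymbol\omega_l$ uniform on the unit sphere and independent of $r_l$ (valid precisely because the weights in (\ref{2.1}) depend only on $|\mathbf x_l|$), the projected squared length factorises as $h_l^2 = r_l^2\,B_l$, where $B_l$ is the squared length of the projection of a uniform unit vector in $\R^N$ onto an $(N-l+1)$-dimensional subspace, independent of $r_l$. Realising the uniform vector as a normalised standard Gaussian identifies $B_l \mathop{=}\limits^{\rm d} \mathrm{Beta}[(N-l+1)/2,(l-1)/2]$. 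It then remains to insert the radial law $r_l^2$ for each weight: integrating in spherical coordinates against $r^{N-1}\,dr$ gives $r_l^2 \mathop{=}\limits^{\rm d} \sigma_l^2\chi_N^2$ in the Gaussian case, $r_l^2 \mathop{=}\limits^{\rm d} \mathrm{Beta}[N/2,\nu_l/2]$ for beta type I, and $r_l^2 \mathop{=}\limits^{\rm d} \mathrm{BetaP}[N/2,\omega_l/2]$ for beta type II.

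With the factorisation $h_l^2 = r_l^2 B_l$ in hand, the moment formulas (\ref{2.2}), (\ref{36}) and the type II analogue follow by multiplicativity over the independent rows together with the independence of $r_l$ and $B_l$: the factor $\langle B_l^\alpha\rangle = ((N-l+1)/2)_\alpha/(N/2)_\alpha$ cancels the $(N/2)_\alpha$ carried by each radial moment, leaving exactly the stated products of Pochhammer symbols. The accompanying distributional identities come from the beta-algebra relations $\chi_N^2\cdot\mathrm{Beta}[a,N/2-a]\mathop{=}\limits^{\rm d}\chi_{2a}^2$, $\mathrm{Beta}[a,b]\cdot\mathrm{Beta}[a+b,c]\mathop{=}\limits^{\rm d}\mathrm{Beta}[a,b+c]$, and the beta prime counterpart $\mathrm{Beta}[a,b]\cdot\mathrm{BetaP}[a+b,c]\mathop{=}\limits^{\rm d}\mathrm{BetaP}[a,c]$, applied with $a=(N-l+1)/2$ and $b=(l-1)/2$. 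The step I expect to need the most care is the conditional rotation in the geometric decomposition: one must check that, after conditioning on $\mathbf x_1,\dots,\mathbf x_{l-1}$, isotropy genuinely leaves $\boldsymbol\omega_l$ uniform and independent, so that $r_l^2$ and $B_l$ really do separate. Rather than verifying the three beta-product identities individually, I would prefer to establish all the distributional statements at once by matching Mellin transforms, the relevant moment problems being determinate.
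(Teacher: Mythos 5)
Your proof is correct, but it takes a genuinely different route from the paper's. The paper obtains Proposition \ref{R} as the $\beta=1$ case of Proposition \ref{R1}: it applies the QR (Gram--Schmidt) decomposition to $X^\dagger$, combines the Jacobian (\ref{3.4}) with the observation (\ref{rX}) to write the joint density of the diagonal entries $\{r_{jj}\}$ as (\ref{3.5a}), and then factorises that density case by case via the substitutions $r_{lj}\mapsto (1\pm r_{jj}^2)^{1/2}r_{lj}$, reading off directly that $r_{jj}^2$ is distributed as $\sigma_j^2\chi^2_{N-j+1}$, ${\rm Beta}[\cdot,\cdot]$ or ${\rm BetaP}[\cdot,\cdot]$. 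You instead argue probabilistically: the base-times-height decomposition $|\det X|^2=\prod_l h_l^2$, the polar factorisation $\mathbf x_l=r_l\boldsymbol\omega_l$ valid by isotropy, the projection lemma $B_l\mathop{=}\limits^{\rm d}{\rm Beta}[(N-l+1)/2,(l-1)/2]$, and beta--gamma algebra such as ${\rm Beta}[a,b]\cdot{\rm Beta}[a+b,c]\mathop{=}\limits^{\rm d}{\rm Beta}[a,b+c]$ (your radial laws and all three beta-algebra identities check out against the stated moments). Amusingly, you have essentially rediscovered Ruben's original argument: the Remark following Proposition \ref{R1} notes that Ruben's proof rests on ``the exact form of the distribution of an isotropic random vector in $\mathbb R^n$ projected orthogonally onto a lower dimensional subspace'', which is precisely your lemma on $B_l$. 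As for what each approach buys: yours avoids matrix Jacobians entirely and makes the independence structure and the distributional identities transparent, while the paper's QR route is chosen because it extends verbatim to complex and quaternion entries ($\beta=2,4$), the real point of Proposition \ref{R1} --- though your argument would extend as well, with the projection law becoming ${\rm Beta}[\beta(N-l+1)/2,\beta(l-1)/2]$. The two points you flag are handled correctly by the care you promise: conditional on $\mathbf x_1,\dots,\mathbf x_{l-1}$ (which a.s. span an $(l-1)$-dimensional subspace, by absolute continuity of the row laws), row independence leaves $(r_l,\boldsymbol\omega_l)$ with its unconditional law, and since the conditional law of $h_l^2$ does not depend on the conditioning value, the $h_l^2$ are genuinely mutually independent; and your preference for Mellin transforms over integer moments is the right instinct, since in the Gaussian case the moments grow like $\prod_j\Gamma(\alpha+j/2)$ and Carleman's criterion can fail for large $N$, whereas the Mellin transform on a strip (for the ${\rm BetaP}$ case one needs $-(N-l+1)/2<{\rm Re}\,\alpha<\omega_l/2$) does determine the law of a positive random variable.
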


Note that the case $\nu_l = 0$ ($l=1,\dots,N$) of (\ref{36}) reclaims (\ref{6}).
Also, we remark that random polytopes formed by the convex hull of the columns of
$X^T$, in the case that $X$ is of size $n \times N$ $(n > N)$ with rows chosen according to
one of the distributions (\ref{2.1}), have been the subject of the recent studies \cite{KTT17,KMTT18}.

The method of proof of Proposition \ref{R},
implied by working due to Mathai \cite{Ma99a} formulated two decades after \cite{Ru79}, 
relies on the QR (Gram-Schmidt) decomposition and the corresponding
change of variables formula (\ref{3.1}). This strategy can be generalised to the case that the matrix $X$
has complex entries, or has quaternion entries, with rows distributed according to one of the
three specifications in (\ref{2.1}).  There are two real components in a complex number and
four in a quaternion. This requires that $N$ in (\ref{2.1}) be replaced by $\beta N$, where 
$\beta = 1,2,4$ for real, complex, quaternion entries respectively, and also in the beta type I
case that $\nu$ be replaced by $\beta \nu$.

In the case of complex entries, $Q$ is complex unitary
and will be denoted $U$.
The matrix $R = [r_{jk} ]_{j,k=1}^N$ is again upper triangular with positive entries on the diagonal, but now
the strictly upper triangular entries are complex. The corresponding
change of variables formula reads (see e.g.~\cite[Prop.~3.2.5]{Fo10})
\begin{equation}\label{3.2}
(dX) = \prod_{j=1}^N r_{jj}^{2(N-j) + 1} (d T) (U^\dagger d U),
\end{equation}
and analogous to (\ref{3.1a})
\begin{equation}\label{3.3a}
\det X^\dagger X = \prod_{j=1}^N r_{jj}^2.
\end{equation}

We say a $2N \times 2N$ matrix $X$ has quaternion entries if it consists entirely of $2 \times 2$ blocks
of the form 
\begin{equation}\label{3.2b}
\begin{bmatrix} z & w \\ - \bar{w} & \bar{z} \end{bmatrix}. 
\end{equation}
In the QR (Gram-Schmidt) decomposition the matrix $Q$ is complex unitary with quaternion entries, and is to be denoted by $U$. The matrix $R$ is now block
upper triangular, with diagonal entries $r_{jj} \begin{bmatrix} 1 & 0 \\  0 & 1 \end{bmatrix}$, $r_{jj} > 0$,
(a positive scalar quaternion) and the strictly upper triangular entries are quaternions. For the change of variables formula
we have (\cite[Exercises 3.2 q.5(i)]{Fo10}, \cite[Lemma 2.7]{DG09}
\begin{equation}\label{3.3}
(dX) = \prod_{j=1}^N r_{jj}^{4(N-j) + 3} (d R) (U^\dagger d U).
\end{equation}
Note that as a $2N \times 2N$ complex matrix, $X$ would satisfy (\ref{3.3a}) with the RHS
squared.  However, regarding $X$
as a quaternion matrix the appropriate definition (see e.g.~\cite[Def.~6.1.2]{Fo10}) gives that	
(\ref{3.3a}) is again to be used as specifying the determinant.

In terms of the notation $\beta$ introduced in the paragraph above (\ref{3.2}),
we remark that (\ref{3.1}), (\ref{3.2}) and (\ref{3.3}) can be summarised by the one expression
\begin{equation}\label{3.4}
(d X )  = \prod_{j=1}^N r_{jj}^{\beta(N - j + 1) - 1} (d R) (U^\dagger dU).
\end{equation}
Moreover, upon applying the QR (Gram-Schmidt) decomposition to $X^\dagger$ we see that
\begin{equation}\label{rX}
(X X^\dagger)_{jj} = | \mathbf x _j|^2 = \sum_{l=1}^j  r_{lj}^2,
\end{equation}
where in the quaternion case the notation $( \cdot )_{jj}$ refers to the scalar multiple of the
identity in position $(jj)$ of the diagonal and $\mathbf x _j$ refers to the $j$-th row of $X$ regarded
as a matrix of quaternions. On this latter point note that with $w=u+iv$ and $z=x+iy$ in
(\ref{3.2b}), the modulus squared of the corresponding quaternion is equal to $u^2+v^2+x^2+y^2$.
This is in keeping with the presentation of a quaternion in terms of its units $\{1,i,j,k\}$ as
a linear combination with real components, $u + vi + xj + yk$.

Suppose that row $j$ of the matrix $X$ has a distribution with PDF $f_j( | \mathbf x_j |^2)$.
Then it follows from (\ref{3.4}) and (\ref{rX}) that the joint PDF of $\{r_{jj} \}$ is
proportional to
\begin{equation}\label{3.5a}
\prod_{j=1}^N r_{jj}^{\beta(N - j + 1) - 1} \int_{\{ r_{jl} \}_{l=1}^{j-1}}
f_j\Big ( \sum_{l=1}^j r_{lj}^2 \Big ) \, 
\prod_{l=1 }^{j - 1} dr_{jl}.
\end{equation}
Note that $d r_{jl} = \prod_{s=1}^\beta dr_{jl}^{(s)}$, where
$dr_{jl}^{(s)}$
are the differentials associated the independent real numbers
which make up $r_{jl}$.
For the particular $f_j$ as implied by (\ref{2.1}), (\ref{3.5a}) has the property that its dependence
on $\{ r_{jj} \}$ factorises in each of the real, complex and quaternion cases. This allows
Proposition \ref{R} to be correspondingly generalised.

\begin{proposition}\label{R1} 
Consider the random matrix ensembles with rows distributed according to
(\ref{2.1}), appropriately generalised as noted in the paragraph above (\ref{3.2})
to allow for complex or quaternion entries. Use the notation $\beta$ of that
paragraph to distinguish the number field.
For the Gaussian weight 
\begin{equation}\label{r1.1}
\Big \langle | \det X |^{2 \alpha} \Big \rangle =
\prod_{l=1}^N (2 \sigma_l^2)^\alpha \, (\beta (N-l+1)/2)_\alpha,
\end{equation}
and
\begin{equation}\label{r1.2}
 |\det X |^{2} \mathop{=}\limits^{\rm d}  \prod_{l=1}^N \sigma_{l}^2
 \chi_{\beta l}^2;
\end{equation}
for the beta type I weight 
\begin{equation}\label{r1.3}
\Big \langle | \det X |^{2 \alpha} \Big \rangle =
\prod_{l=1}^N {(\beta (N-l+1)/2)_\alpha \over (\beta (N + \nu_l)/2)_\alpha}
\end{equation}
and
\begin{equation}\label{r1.4}
| \det X |^{2 } \mathop{=}\limits^{\rm d}  \prod_{l=1}^N {\rm Beta}[
 \beta (N-l+1)/2,  \beta (\nu_l + l - 1)/2];
\end{equation}
for the beta type II weight 
\begin{equation}\label{r1.5}
\Big \langle | \det X |^{2 \alpha} \Big \rangle =
\prod_{l=1}^N (\beta (N-l+1)/2)_\alpha (\beta \omega_l/2)_{-\alpha}
\end{equation}
and
\begin{equation}\label{r1.6}
 |\det X |^{2 } \mathop{=}\limits^{\rm d}  \prod_{l=1}^N {\rm BetaP}[
  \beta (N-l+1)/2, \beta \omega_l/2].
\end{equation}
\end{proposition}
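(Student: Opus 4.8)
The plan is to work directly from the factorised joint PDF of the diagonal entries $\{r_{jj}\}$ recorded in (\ref{3.5a}), together with the determinant identity (\ref{3.3a}), which gives $|\det X|^2 = \prod_{j=1}^N r_{jj}^2$. Since (\ref{3.5a}) exhibits the joint density as a product over $j$, the variables $r_{jj}$ are mutually independent, so it suffices to identify the marginal law of each $r_{jj}^2$ separately: the distributional identities (\ref{r1.2}), (\ref{r1.4}), (\ref{r1.6}) then follow by taking the product over $j$, and the moment formulas (\ref{r1.1}), (\ref{r1.3}), (\ref{r1.5}) by multiplying the corresponding one-variable moments.

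First I would perform the off-diagonal integration in (\ref{3.5a}). For fixed $j$, write $|\mathbf{x}_j|^2 = r_{jj}^2 + |\mathbf{t}|^2$, where $\mathbf{t} \in \mathbb{R}^{\beta(j-1)}$ collects the $\beta$ real components of each of the $j-1$ strictly upper entries $r_{lj}$ $(l<j)$, as dictated by (\ref{rX}). Because each weight in (\ref{2.1}) depends on $|\mathbf{x}_j|^2$ alone, the scaling substitution $\mathbf{t} = \kappa_j(r_{jj})\,\mathbf{u}$ with $\kappa_j = 1$ (Gaussian), $\kappa_j = \sqrt{1-r_{jj}^2}$ (beta type I), or $\kappa_j = \sqrt{1+r_{jj}^2}$ (beta type II) reduces the $\beta(j-1)$-dimensional off-diagonal integral to an $r_{jj}$-independent constant times an explicit power of $r_{jj}$. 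Concretely, the Gaussian integral is independent of $r_{jj}$ and leaves $e^{-r_{jj}^2/(2\sigma_j^2)}$; the beta type I scaling extracts the factor $(1-r_{jj}^2)^{\beta(j-1)/2 + \beta\nu_j/2 - 1}$; and the beta type II scaling extracts $(1+r_{jj}^2)^{\beta(j-1)/2 - \beta(N+\omega_j)/2}$, convergence in the last case following from $N + \omega_j > j-1$.

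Multiplying by the Jacobian prefactor $r_{jj}^{\beta(N-j+1)-1}$ from (\ref{3.4}) and changing variables to $y = r_{jj}^2$ then identifies the marginal density of $r_{jj}^2$ as proportional to $y^{\beta(N-j+1)/2 - 1} e^{-y/(2\sigma_j^2)}$, to $y^{\beta(N-j+1)/2-1}(1-y)^{\beta(\nu_j+j-1)/2-1}$, and to $y^{\beta(N-j+1)/2-1}(1+y)^{-\beta(N+\omega_j-j+1)/2}$ in the three cases. These are precisely the densities of $\sigma_j^2\,\chi^2_{\beta(N-j+1)}$, of $\mathrm{Beta}[\beta(N-j+1)/2,\beta(\nu_j+j-1)/2]$, and of $\mathrm{BetaP}[\beta(N-j+1)/2,\beta\omega_j/2]$, which yield (\ref{r1.4}) and (\ref{r1.6}) directly and (\ref{r1.2}) after the cosmetic observation that the deterministic scalars $\sigma_j^2$ factor out and the degree-of-freedom set $\{\beta(N-j+1)\}_{j=1}^N$ coincides with $\{\beta l\}_{l=1}^N$. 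The moment identities follow from the standard evaluations $\langle(\chi^2_k)^\alpha\rangle = 2^\alpha (k/2)_\alpha$, $\langle \mathrm{Beta}[a,b]^\alpha\rangle = (a)_\alpha/(a+b)_\alpha$, and $\langle \mathrm{BetaP}[a,b]^\alpha\rangle = (a)_\alpha (b)_{-\alpha}$.

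The one genuinely new ingredient relative to Ruben's real computation is the $\beta$-dependent bookkeeping: I must verify that in the complex and quaternion cases the Jacobian exponent is $\beta(N-j+1)-1$ and that row $j$ contributes exactly $\beta(j-1)$ real off-diagonal integration variables. Both facts are already supplied by the unified change of variables (\ref{3.4}) and the row-length decomposition (\ref{rX}), so the main obstacle is not conceptual but rather ensuring the powers of $\beta$ in the dimension counts of the scaling substitution are tracked correctly and consistently across all three number fields; once these are in place, the three marginal integrals are routine.
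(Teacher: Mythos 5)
Your proposal is correct and takes essentially the same route as the paper's own proof: starting from the factorised joint density (\ref{3.5a}), applying the identical scaling substitutions $r_{lj} \mapsto (1 \mp r_{jj}^2)^{1/2} r_{lj}$ (trivial in the Gaussian case), and reading off the marginal laws of the $r_{jj}^2$ via (\ref{3.3a}). Your only additions are the explicit one-variable moment evaluations and the observation that the deterministic $\sigma_l^2$ factor out so the degrees of freedom may be relabelled in (\ref{r1.2}), both of which the paper leaves implicit.
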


\begin{proof}
In the Gaussian case $f_j(u)$ in (\ref{3.5a}) is proportional to $e^{-u/2 \sigma_j^2}$. The dependence
in the integrand on $r_{ll}$ thus factorises, showing immediately that the joint distribution of
$\{r_{jj} \}$ is proportional to 
$$
\prod_{j=1}^N r_{jj}^{\beta(N-j+1) - 1} e^{- r_{jj}^2/2 \sigma_j^2}.
$$
Recalling (\ref{3.3a}), the results (\ref{r1.1}) and (\ref{r1.2}) follow as a corollary.

In the case of the beta type I weight, we have that $f_j(u)$ is proportional to
$(1 - u)^{\nu_j/2 - 1}$, $0<u<1$. The dependence on $r_{jj}$ is determined by the change of
variables $r_{lj} \mapsto (1 - r_{jj}^2)^{1/2} r_{lj}$ $(l < j )$, with $0 < r_{jj} < 1$ in (\ref{3.5a}).
This shows that the joint PDF of $\{r_{jj} \}$ is proportional to
$$
\prod_{j=1}^N r_{jj}^{\beta (N-j+1)-1} (1 - r_{jj}^2)^{\beta(j-1+\nu_j)/2 - 1}.
$$
Upon recalling (\ref{3.3a}), we deduce from this both (\ref{r1.3}) and (\ref{r1.4}).

The workings needed to deduce (\ref{r1.5}) and (\ref{r1.6}) for the beta type II weight, proceeds
in an analogous way to the beta type I case, except now the change of variables is
$r_{lj}  \mapsto (1 + r_{jj}^2)^{1/2} r_{lj}$ $(l < j )$.

\end{proof}

\begin{remark}
The above working applies simultaneously to cases that the entries of the random matrix are
real, complex or quaternion. In the real case ($\beta = 1$) this then provides a proof of
Ruben's result Proposition \ref{R}. Ruben's proof \cite{Ru79} is seemingly quite different, making use of
(amongst other geometrical/ statistical constructions) the exact form of the distribution of an isotropic
random vector in $\mathbb R^n$ projected orthogonally onto a lower dimensional subspace.

The moments for the Gaussian weight for all three number systems, and similarly the moments
for the beta type I weight with $\nu_l=0$ (uniform on the sphere) and $\beta \nu_l/2 = 1$ (uniform 
on the ball), have been computed in the recent work \cite[Prop.~8]{Fo17}; see also \cite{Mo12}
in the Gaussian case. The working relies on (Miles version of) the 
 Blaschke-Petkantschin formula from integral geometry.
\end{remark}

\begin{remark}\label{R4}
As already mentioned below (\ref{2.1}), a row distribution obtained from the beta type I weight with $\nu \to 0^+$ and $\beta = 1$
gives the Gram ensemble in which rows are sampled uniformly from the surface of the unit sphere
in $\mathbb R^N$. As noted in \cite{Pr67}, the latter sampling can be done by normalising a
standard Gaussian vector, which has the consequence that  for $Y$ a member of the Gram ensemble and
$X$ a standard Gaussian matrix
$$
\prod_{l=1}^N \chi_{\beta N}^2 \, | \det Y |^2 
\mathop{=}\limits^{\rm d} | \det X|^2.
$$
The moment formulas (\ref{r1.1}) (with $\sigma_l=1/2$) and (\ref{r1.3}) (with $\nu_l=0$) are
consistent with this relation.

We remark too that the construction of vectors with distributions (\ref{2.1}), as given in the paragraph
subsequent to (\ref{2.1}), extend naturally to the setting of complex and quaternion
entries. Thus in the Gaussian case, the independent part of each entry is to be a normal random variable with mean zero and standard deviation $\sigma$. For the beta type I weight with $\nu \in \mathbb N_0$,
one chooses a standard Gaussian vector with $N + \nu_0$ complex or quaternion entries,
normalises it to obtain a vector chosen uniformly from the unit sphere in $\mathbb C^{N + \nu_0}$
or $\mathbb H^{N+ \nu_0}$, and restricts to the first $N$ components;
see \cite[Eq.~(3.113)]{Fo10}.
Choosing a $1 \times N$ row
vector $\mathbf v$ with standard complex or quaternion entries, then forming
$(1/\sqrt{w}) \mathbf v$ with $w \mathop{=}\limits^{\rm d} \chi_\omega^2$
gives the sought generalisation of the beta type II weight.
\end{remark}

Of particular interest, in light of (\ref{6}) with $k=N$, and its generalisation to the complex
and quaternion cases, is the expected value of $\log | \det X|$.

\begin{corollary}\label{C4}
Use $\beta$ to distinguish the number field corresponding to the entries of $X$. Let
$\Psi(x) := {d \over dx} \log \Gamma(x)$ denote the digamma function. For the Gaussian weight
\begin{equation}\label{cw.1}
\Big \langle \log | \det X | \Big \rangle = {1 \over 2}
\sum_{l=1}^N \bigg ( \log 2 \sigma_l^2 + \Psi \Big ( \beta(N-l+1)/2 \Big ) \bigg );
\end{equation}
for the beta type I weight
\begin{equation}\label{cw.2}
\Big \langle \log | \det X | \Big \rangle = {1 \over 2}
\sum_{l=1}^N \bigg ( \Psi \Big ( \beta (N-l+1)/2 \Big ) - \Psi \Big ( \beta(N + \nu_l)/2 \Big ) \bigg );
\end{equation}
and for the beta type II weight
\begin{equation}\label{cw.3}
\Big \langle \log | \det X | \Big \rangle = {1 \over 2}
\sum_{l=1}^N \bigg ( \Psi \Big ( \beta (N-l+1)/2 \Big ) - \Psi \Big ( \beta \omega_l/2 \Big ) \bigg ).
\end{equation}
\end{corollary}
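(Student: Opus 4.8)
The plan is to read off $\langle \log | \det X | \rangle$ as the derivative at $\alpha = 0$ of the moment formulas already supplied by Proposition \ref{R1}. Writing $\log | \det X | = \tfrac{1}{2} \log | \det X |^2$ and noting
\begin{equation*}
\langle | \det X |^{2 \alpha} \rangle = \langle e^{\alpha \log | \det X |^2} \rangle,
\end{equation*}
differentiation under the expectation sign followed by evaluation at $\alpha = 0$ gives
\begin{equation*}
\frac{d}{d \alpha} \langle | \det X |^{2 \alpha} \rangle \Big|_{\alpha = 0} = \langle \log | \det X |^2 \rangle = 2 \langle \log | \det X | \rangle .
\end{equation*}
Thus the corollary reduces to differentiating each of (\ref{r1.1}), (\ref{r1.3}) and (\ref{r1.5}) at $\alpha = 0$ and halving the result.

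The single ingredient required is the logarithmic derivative of the Pochhammer symbol. Since $(c)_\alpha = \Gamma(c + \alpha)/\Gamma(c)$ satisfies $(c)_0 = 1$, one has $\tfrac{d}{d\alpha} (c)_\alpha |_{\alpha=0} = \Psi(c)$, and for the factor occurring in the beta type II moments $\tfrac{d}{d\alpha} (c)_{-\alpha} |_{\alpha=0} = -\Psi(c)$; likewise $\tfrac{d}{d\alpha} (2 \sigma_l^2)^\alpha |_{\alpha=0} = \log 2 \sigma_l^2$. Because each moment formula is a finite product whose value at $\alpha = 0$ equals one, the derivative of the product is simply the sum of the derivatives of its individual factors. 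Applying this to the Gaussian formula (\ref{r1.1}) yields the summand $\log 2 \sigma_l^2 + \Psi(\beta(N-l+1)/2)$, to the beta type I formula (\ref{r1.3}) the summand $\Psi(\beta(N-l+1)/2) - \Psi(\beta(N+\nu_l)/2)$, and to the beta type II formula (\ref{r1.5}) the summand $\Psi(\beta(N-l+1)/2) - \Psi(\beta \omega_l/2)$; after the overall factor $\tfrac{1}{2}$ these are precisely (\ref{cw.1}), (\ref{cw.2}) and (\ref{cw.3}).

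The only point genuinely requiring care—the main obstacle—is justifying the interchange of differentiation and expectation at $\alpha = 0$. This is legitimate once $\langle | \det X |^{2 \alpha} \rangle$ is finite on an open interval of $\alpha$ about the origin, which is immediate from the product-of-gamma-function expressions: each argument such as $\beta(N-l+1)/2 + \alpha$, $\beta(N+\nu_l)/2 + \alpha$ or $\beta \omega_l/2 - \alpha$ remains positive for $|\alpha|$ small under the standing parameter constraints (with $\omega_l > 0$ needed so that $\Psi(\beta \omega_l/2)$ is finite in the type II case), whence the moment function is real-analytic near $0$. A standard dominated-convergence argument then secures $\tfrac{d}{d\alpha} \langle | \det X |^{2\alpha} \rangle |_{\alpha=0} = \langle \log | \det X |^2 \rangle$, completing the derivation.
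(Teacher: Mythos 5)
Your proof is correct and is essentially the paper's own (implicit) argument: Corollary \ref{C4} is stated without separate proof precisely because, as you show, it follows by differentiating the moment formulas (\ref{r1.1}), (\ref{r1.3}) and (\ref{r1.5}) of Proposition \ref{R1} at $\alpha = 0$, using $\tfrac{d}{d\alpha}(c)_{\pm\alpha}\big|_{\alpha=0} = \pm\Psi(c)$, and halving. Your attention to the interchange of differentiation and expectation, and to the requirement $\omega_l > 0$ in the type II case, is a sound refinement rather than a deviation.
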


Specifically, due to each of the matrix ensembles considered in this section having the invariance
(\ref{4}), forming the random product matrix (\ref{PX}), we have from (\ref{6}) that the sum of the Lyapunov
exponents satisfy
\begin{equation}\label{2.x2}
\mu_1 + \cdots + \mu_N = \Big \langle \log | \det X | \Big \rangle
\end{equation}
and are thus given explicitly by the results of Corollary \ref{C4}.

\subsection{Noncentral Wishart matrices}\label{S2.2}
Consider the Gaussian case of (\ref{2.1}), and suppose $\sigma_i = \sigma$ for each row $i$
of the corresponding random matrix $X$.
Then the joint distribution of the elements is proportional to $e^{- {\rm Tr} \, X^T X/2 \sigma^2}$.
The latter exhibits an invariance  under the class of transformations 
$
X \mapsto R_2 X R_1,
$
where $R_1, R_2$ are real orthogonal matrices, extending the invariance (\ref{4}). Let $c$ be a scalar,
and introduce the random matrix $Y:= cI_N + X$, where $I_N$ denotes the identity.
 This is invariant under transformations of the form
\begin{equation}\label{4a}
Y \mapsto R^T Y R, 
\end{equation}
for $R$ real orthogonal. 
Forming $W = Y^T Y$ then gives an example of a non-central Wishart matrix; see 
e.g.~\cite{Mu82}.
According to Constantine \cite{Co63},
generalised to the complex case in \cite[Eq.~(21)]{CZ05}, with $\tilde{c} = c^2/2 \sigma^2$ we have
\begin{align}\label{FW}
\Big \langle (\det W)^\alpha \Big \rangle & =
\prod_{l=1}^N (2 \sigma^2)^\alpha (\beta (N-l+1)/2)_\alpha \,
e^{-N \tilde{c}} \, {}_1^{} F_1^{(2/\beta)}(\alpha + \beta N/2;\beta N/2;\tilde{c}I_N) \nonumber \\
& = \prod_{l=1}^N (2 \sigma^2)^\alpha (\beta(N-l+1)/2)_\alpha \,
{}_1^{} F_1^{(2/\beta)}(-\alpha,\beta N/2;-\tilde{c}I_N),
\end{align}
where the second equality follows from a generalization of one of Kummer's transformations;
see e.g.~\cite[Th.~7.4.3]{Mu82}, \cite[Eq.~(13.16)]{Fo10}. Here 
${}_1^{} F_1^{(2/\beta)}$ is, for $\beta = 1,2$ and 4, a hypergeometric function of matrix argument
$X$, dependent on its eigenvalues only (see e.g.~\cite[Ch.~13]{Fo10}.
Since the hypergeometric function is normalised to unity for $X$ equal to the zero matrix, we
see that (\ref{FW}) correctly reduces to (\ref{2.2}) in this special case.
On the other hand, for $c \ne 0$ the functional dependence on $\alpha$ is now much more
complicated.

When $\alpha$ is a positive integer, the series form of ${}_1^{} F_1^{(2/\beta)}$
in the second line of (\ref{FW}) terminates
 (see e.g.~\cite[eq.~(13.2)]{Fo10}), leaving a polynomial in $c$. This is
 consistent with the LHS of (\ref{FW}), as implied by the meaning of $W$. 
 For $\tilde{c}$ large, a known \cite[Th.~3.2]{CM76}, \cite[\S 7]{Mu78} asymptotic formula
 for ${}_1^{} F_1^{(2/\beta)}$ in the case $\beta = 1$, appropriately generalised to all
 $\beta > 0$ using \cite[Eqns.~(13.12), (13.4), (13.2)]{Fo10} gives
 \begin{equation}\label{2.26}
 \Big \langle \Big ( \det \Big | \mathbb I_N + {1 \over c} X \Big |  \Big )^{2 \alpha }   \Big \rangle \\
 \sim {}_2^{} F_0^{(2/\beta)} \Big ( 1 - \beta/2 - \alpha, - \alpha; {1 \over \tilde{c}}  \mathbb I_N\Big ).
 \end{equation}
 By differentiating with respect to $\alpha$ and setting $\alpha = 0$, this gives that to leading
 order
  \begin{equation}\label{2.28}
 \Big \langle
 \log \det \Big | \mathbb I_N + {1 \over c} X \Big |  \Big \rangle
 \mathop{\sim}\limits_{c \to \infty} {\sigma^2 N (\beta/2 -1) \over c^2}.
  \end{equation}
  Notice that this vanishes for $\beta = 2$. In fact in the case $\beta = 2$, 
  differentiating (\ref{2.26}) with respect to $\alpha$ and setting $\alpha = 0$ gives zero for
  each term in the asymptotic expansion. This indicates that then the large $c$ decay is exponentially
  fast. In the case $N=1$, the matrix hypergeometric function in (\ref{FW}) reduces to the
  classical confluent hypergeometric function, and the exponentially small term has been exhibited explicitly in \cite{Pa13}.
  
  In keeping with (\ref{2.x2}), updated so that the RHS is given by the LHS of (\ref{2.28}), the
  result (\ref{2.28}) gives the leading term in the large $c$ asymptotic expansion for the sum of
  the Lyapunov exponents of the random product matrix formed by $(\mathbb I_N + {1 \over c} X)$.

\setcounter{equation}{0}
\section{Largest Lyapunov exponent}
\subsection{General formalism}
Consider the three random matrix ensembles specified in terms of the row distributions (\ref{2.1}).
Extend each by allowing for either real, complex or quaternion entries. Now form the random product
matrix (\ref{PX}). Since each $X_i$ is invariant under right multiplication by a unitary matrix with elements
from the corresponding number field, we know from \cite{CN84,Ne86} that the Lyapunov
exponents satisfy (\ref{6}), with $X^T$ replaced by $X^\dagger$ in the complex and quaternion
cases. This is a dramatic simplification from (\ref{5.1}), or the characterisation from Oseledec's
multiplicative ergodic theorem \cite{Os68}, which tells us that the spectrum of
$\lim_{m \to \infty} (P_m^\dagger P_m)^{1/2m}$ is given by $\{ e^{\mu_k} \}_{k=1}^N$.

In the case $k=1$, (\ref{6}) involves the sum of the absolute values squared of the first column of $X$
(since the rows of $X$ are isotropic, this may as well be any one of the $N$ columns of $X$). 
Thus
\begin{equation}\label{m1}
\mu_1 = {1 \over 2} \int_0^\infty (\log t ) p(t) \, dt,
\end{equation}
where $p(t)$ is the PDF for $\sum_{l=1}^N |x_{l,1} |^2$, with
$x_{l,1}$ denoting the entries in the first column
of $X$.
To
compute $p(t)$, the first task is to deduce from the row distributions (\ref{2.1})
the marginal distribution of a single element.

\begin{proposition}
Consider a random vector $\mathbf x = (x_1,\dots, x_N)$, with entries that are either real $(\beta = 1)$,
complex ($\beta = 2$) or quaternion ($\beta = 4$). Let the PDF of the distribution of $\mathbf x$ be
given by one of the three functional forms in (\ref{2.1}), appropriately modified in the complex and
quaternion cases (recall the paragraph above (\ref{3.2})). Each entry $x_i$ then involves $\beta$
independent reals.
 In the Gaussian case, the PDF of the marginal distribution of a single entry $x$ of $\mathbf x$ is
\begin{equation}\label{dw.1}
 (2 \pi \sigma^2)^{-\beta/2} e^{-|x |^2/2 \sigma^2};
\end{equation}
for the beta type I weight it is given by
\begin{equation}\label{dw.2}
{\Gamma(\alpha + \beta/2) \over \pi^{\beta/2} \Gamma(\alpha) } (1 - |x|^2)^{\alpha - 1}
\Big |_{\alpha = \beta (N-1+ \nu )/2 }, \qquad
|x| < 1;
\end{equation}
and for the beta type II weight by
\begin{equation}\label{dw.3}
 { \Gamma( (\omega + \beta)/2 ) \over \pi^{\beta/2} \Gamma(\omega/2)} 
(1 + |x|^2)^{-{1 \over 2} (\omega + \beta)}.
\end{equation}
\end{proposition}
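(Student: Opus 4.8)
The plan is to compute the marginal distribution of a single entry $x$ by integrating out the remaining $N-1$ entries of $\mathbf x$ from the joint PDF in (\ref{2.1}), working uniformly in $\beta$. Writing $\mathbf x = (x, \mathbf x')$ with $\mathbf x'$ the remaining $N-1$ entries (each carrying $\beta$ real components, so $\mathbf x'$ lives in $\R^{\beta(N-1)}$), the marginal PDF of $x$ is obtained as $\int f(|x|^2 + |\mathbf x'|^2)\, d\mathbf x'$. The key observation is that the integrand depends on $\mathbf x'$ only through $|\mathbf x'|^2 = \rho^2$, so I would pass to spherical coordinates in $\R^{\beta(N-1)}$, reducing each integral to a one-dimensional radial integral against the surface-area factor $\rho^{\beta(N-1)-1}\,d\rho$.

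First I would dispatch the Gaussian case (\ref{dw.1}), where $f$ factorises: the integral over $\mathbf x'$ of $e^{-|\mathbf x'|^2/2\sigma^2}$ is a product of $\beta(N-1)$ Gaussian integrals, each contributing $(2\pi\sigma^2)^{1/2}$, so the surviving factor is exactly $(2\pi\sigma^2)^{-\beta/2}e^{-|x|^2/2\sigma^2}$, matching the claim. For the beta type~I weight, after fixing $x$ with $|x|<1$, the remaining integral is $\int_{|\mathbf x'|^2 < 1-|x|^2} (1-|x|^2-|\mathbf x'|^2)^{\nu\beta/2-1}\,d\mathbf x'$ (with $\nu$ replaced by $\beta\nu$ as noted above (\ref{3.2}), and recalling the normalisation $\pi^{-\beta N/2}(\beta\nu/2)_{\beta N/2}$). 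Substituting $\mathbf x' = (1-|x|^2)^{1/2}\mathbf y$ pulls out a factor $(1-|x|^2)^{\beta(N-1)/2}$ from the Jacobian together with $(1-|x|^2)^{\beta\nu/2-1}$ from the weight, leaving a constant integral over the unit ball in $\R^{\beta(N-1)}$; collecting powers gives the exponent $\alpha-1$ with $\alpha = \beta(N-1+\nu)/2$, and the Pochhammer/gamma bookkeeping produces the stated prefactor. The beta type~II case (\ref{dw.3}) proceeds identically with the substitution $\mathbf x' = (1+|x|^2)^{1/2}\mathbf y$, now over all of $\R^{\beta(N-1)}$, turning $(1+|x|^2+|\mathbf x'|^2)^{-(\beta N+\beta\omega)/2}$ into $(1+|x|^2)^{-(\omega+\beta)/2}$ times a constant beta-type integral.

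The main obstacle is purely bookkeeping rather than conceptual: I must track the three separate normalisation constants through the radial integrals and verify that the leftover $x$-independent integrals evaluate to the precise gamma-function ratios asserted in (\ref{dw.2}) and (\ref{dw.3}). Concretely, the residual integrals are standard beta integrals on the ball (type~I) or beta-prime integrals on $\R^{\beta(N-1)}$ (type~II), each expressible via the Dirichlet/beta integral $\int_0^\infty \rho^{a-1}(1+\rho^2)^{-b}\,d\rho = \tfrac12 B(a/2, b-a/2)$; the cancellation of these constants against the prefactors in (\ref{2.1}) is what forces the clean form of the answer. A useful consistency check is that setting $N=1$ must make the marginal equal the full PDF (\ref{2.1}), and that the type~I result with $\nu \to 0^+$, $\beta=1$ reproduces the single-coordinate marginal of the uniform distribution on the unit sphere, recovering Remark~\ref{R4}.
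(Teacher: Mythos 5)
Your proposal is correct and follows essentially the same route as the paper: the paper's proof also fixes one entry, integrates out the rest, handles the Gaussian case by factorisation, and in the two beta cases uses exactly your scaling substitution ($x_j \mapsto (1-|x_1|^2)^{1/2}x_j$, resp.\ $(1+|x_1|^2)^{1/2}x_j$) to extract the $x$-dependence, identifying the constant at the end by normalisation. You merely make explicit the gamma-function bookkeeping that the paper leaves implicit behind ``proportional to''.

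One slip to fix in the type II case: you wrote the joint weight exponent as $-(\beta N+\beta\omega)/2$, i.e.\ you scaled $\omega\mapsto\beta\omega$. The paper's convention (paragraph above (\ref{3.2})) replaces $N$ by $\beta N$ in all three weights but scales only $\nu$ by $\beta$, not $\omega$; the correct exponent is $-(\beta N+\omega)/2$. With your $\beta\omega$ the substitution would yield $(1+|x|^2)^{-\beta(\omega+1)/2}$, which disagrees with (\ref{dw.3}) whenever $\beta>1$ and $\omega>0$, and also breaks the normalisation-constant cancellation you invoke; with the paper's convention one gets $-(\beta N+\omega)/2+\beta(N-1)/2=-(\omega+\beta)/2$, exactly as stated. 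Your final asserted form is the right one, so this is a parameter-tracking inconsistency in the intermediate step rather than a flaw in the method.
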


\begin{proof}
In the complex case an element is of the form $x^{(1)} + i x^{(2)}$ with $x^{(1)}, x^{(2)} \in \mathbb R$
and $|x|^2 = (x^{(1)})^2 +  (x^{(2)})^2$. In the quaternion case, written for present purposes in terms of
its units $\{1,i,j,k\}$, an element is of the form $x = x^{(1)} + i x^{(2)} + j x^{(3)} + k x^{(4)}$ with
$x^{(s)} \in \mathbb R$, $(s=1,\dots,4)$ and $|x|^2 = \sum_{s=1}^4 (x^{(s)})^2$. By the fact that the
PDFs (\ref{2.1}) are functions only of $|\mathbf x |^2$, for the given number system each 
component $x$ has the same distribution. The task to compute this distribution is to integrate the
respective PDFs over all components but one. Due to the factorisation property of the Gaussian,
the result (\ref{dw.1}) then follows immediately for this weight.

Consider next the beta type I weight. Suppose for definiteness that the component of interest
is $x_1$. For each of the  other components
$\{x_j\}_{j=2,,\dots,N}$ we change variables $x_j \mapsto (1 - 
|x_1|^2)^{1/2} x_j$.
 This shows that the marginal PDF of $x_1$ is proportional to
\begin{equation}\label{CN}
(1 - |x|^2)^{\beta (N - 1 + \nu )/2 -1}, \qquad |x| < 1,
\end{equation}
thus implying (\ref{dw.2}).

The analogous change of variables for the beta type II weight gives that the marginal PDF of
$x_1$ is proportional to
$$
(1 + |x|^2)^{-{1 \over 2} (\omega + \beta)}, 
$$
thus implying (\ref{dw.3}).

\end{proof}

As in the setting of Section \ref{S2}, we allow for the parameters $\sigma, \nu, \omega$ in
(\ref{dw.1}), (\ref{dw.2}), (\ref{dw.3}) respectively to be different for different rows. 

\subsection{The Gaussian case}
The PDF $p(t)$
in (\ref{m1}) is then the PDF for $S_N=Y_1+\cdots+Y_N$, where each $Y_j$ has distribution
Gamma$[\beta/2,1/2\sigma_j^2]$. An infinite series for this PDF, with coefficients defined recursively,
is given in \cite{Si92}. A closed form expression is available in the special case that
\begin{equation}\label{sb}
1/2 \sigma_1^2 = \cdots = 1/2 \sigma_{N_0}^2 = b_1, \quad1/2 \sigma_{N_0+1}^2 = \cdots = 1/2 \sigma_{N}^2 = b_2, 
\end{equation}
since then $S_N \mathop{=}\limits^{\rm d} X_1 + X_2$, where $X_1$, $X_2$ have distribution
Gamma$[\beta N_0/2,b_1]$, Gamma$[\beta (N-N_0)/2,b_2]$
respectively. From this characterisation, it is straightforward to derive (see e.g.~\cite{Ba13}) that
\begin{equation}\label{3.5}
p(t) = {b_1^{\beta N_0/2} b_2^{\beta(N-N_0)/2} \over \Gamma(\beta N /2)} t^{\beta N/2 -1} e^{-b_2 t}
\, {}_1F_1(\beta N_0/2,\beta N/2;(b_2 - b_1)t).
\end{equation}
Use of computer algebra gives
\begin{equation}\label{3.6}
\int_0^\infty t^s p(t) \, dt =  
{\Gamma(\beta N/2 + s) \over \Gamma(\beta N/2)}
b_2^{-s} \, {}_2 F_1\Big (\beta N_0/2,- s; \beta N/2;
1 - {b_2 \over b_1} \Big ).
\end{equation}
In the case $N_0=0$, this reclaims (\ref{2.2}). Also, the limit $b_1 \to \infty$
reclaims (\ref{2.2})
but with $N$ replaced by $N-N_0$. This latter property is consistent with
the effect of taking $b_1 \to \infty$ being to set the first $N_0$ rows equal to zero. Taking the
derivative of (\ref{3.6}) with respect to $s$ and setting $s=0$ gives the evaluation of $\mu_1$
according to (\ref{m1}).

\begin{proposition}
Consider a product of independent Gaussian random matrices (\ref{GK}), with variances given by (\ref{sb}), and rows ordered so that
$b_2 \le b_1$. We have
\begin{multline}\label{3.7}
\mu_1 = {1 \over 2} \bigg (
\Psi(\beta N/2) - \log b_2 +
{\Gamma(\beta N/2) \over \Gamma(\beta N_0/2) \Gamma(\beta(N-N_0)/2)} \\
\times
\int_0^1 x^{\beta N_0/2 -1} (1 - x)^{\beta (N - N_0)/2 - 1}
\log \bigg ( 1 - \Big ( 1 - {b_2 \over b_1} \Big ) x  \bigg ) \, dx \bigg ).
\end{multline}
\end{proposition}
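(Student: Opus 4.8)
The plan is to read off $\mu_1$ from the moment formula (\ref{3.6}) by differentiating in $s$ at $s=0$, exactly as indicated in the text preceding the Proposition. Write $M(s):=\int_0^\infty t^s p(t)\,dt$ for the right-hand side of (\ref{3.6}). Since $p$ is a genuine PDF, $M(0)=1$; and because $p(t)$ decays exponentially (see (\ref{3.5})), the integral $\int_0^\infty t^s|\log t|\,p(t)\,dt$ is locally bounded near $s=0$, so differentiation under the integral sign is valid and, by (\ref{m1}), $2\mu_1=\int_0^\infty(\log t)\,p(t)\,dt=M'(0)$. The three $s$-dependent factors of $M(s)$ in (\ref{3.6}) each equal $1$ at $s=0$, so $M'(0)$ is simply the sum of their logarithmic derivatives at $s=0$: the factor $\Gamma(\beta N/2+s)/\Gamma(\beta N/2)$ contributes $\Psi(\beta N/2)$, the factor $b_2^{-s}$ contributes $-\log b_2$, and the third factor contributes the $s$-derivative at $s=0$ of ${}_2F_1(\beta N_0/2,-s;\beta N/2;z)$, where $z:=1-b_2/b_1$.

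The key computation is this parameter derivative. From the series ${}_2F_1(\beta N_0/2,-s;\beta N/2;z)=\sum_{n\ge0}\frac{(\beta N_0/2)_n}{(\beta N/2)_n}\frac{(-s)_n}{n!}z^n$ only the terms $n\ge1$ depend on $s$. Writing $(-s)_n=\prod_{k=0}^{n-1}(k-s)$, this product vanishes at $s=0$ for every $n\ge1$, and differentiating by the product rule leaves only the summand in which the $k=0$ factor is differentiated, so that $\frac{d}{ds}(-s)_n\big|_{s=0}=-(n-1)!$ and hence $\frac{d}{ds}\frac{(-s)_n}{n!}\big|_{s=0}=-\tfrac1n$. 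This gives
\[
\frac{d}{ds}\Big|_{s=0}{}_2F_1(\beta N_0/2,-s;\beta N/2;z)=-\sum_{n=1}^\infty\frac{(\beta N_0/2)_n}{(\beta N/2)_n}\frac{z^n}{n},
\]
the ordering convention $b_2\le b_1$ guaranteeing $0\le z<1$ and hence convergence.

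To reach the stated integral I would invert the Euler beta integral
\[
\int_0^1 x^{\beta N_0/2-1+n}(1-x)^{\beta(N-N_0)/2-1}\,dx=\frac{\Gamma(\beta N_0/2+n)\,\Gamma(\beta(N-N_0)/2)}{\Gamma(\beta N/2+n)},
\]
which identifies $\frac{(\beta N_0/2)_n}{(\beta N/2)_n}$ with this integral times $\Gamma(\beta N/2)/\big(\Gamma(\beta N_0/2)\Gamma(\beta(N-N_0)/2)\big)$. Combining this with $-\sum_{n\ge1}(zx)^n/n=\log(1-zx)$ and interchanging summation and integration (legitimate for $0\le z<1$ by dominated convergence) rewrites the series as
\[
\frac{\Gamma(\beta N/2)}{\Gamma(\beta N_0/2)\Gamma(\beta(N-N_0)/2)}\int_0^1 x^{\beta N_0/2-1}(1-x)^{\beta(N-N_0)/2-1}\log\!\big(1-zx\big)\,dx.
\]
Substituting $z=1-b_2/b_1$ and assembling the three contributions to $M'(0)=2\mu_1$ then yields (\ref{3.7}).

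I expect the main obstacle to be the parameter derivative of ${}_2F_1$ together with its conversion to the integral form: the delicate points are the product-rule evaluation of $(-s)_n$ at $s=0$, where all but one summand drop out, and the justification of the term-by-term integration. By comparison the analytic preliminaries—differentiating $M(s)$ under the integral sign and checking $M(0)=1$—are routine, resting on the exponential decay of $p$ in (\ref{3.5}) and the convergence range $0\le z<1$ secured by the ordering $b_2\le b_1$.
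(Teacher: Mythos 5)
Your proposal is correct and takes essentially the same route as the paper: differentiate the moment formula (\ref{3.6}) with respect to $s$ at $s=0$ (via (\ref{m1})) and invoke the Euler integral form of ${}_2F_1$, which is precisely the paper's one-line proof. The only cosmetic difference is the order of operations---you differentiate the Gauss series term by term (using $\tfrac{d}{ds}(-s)_n\big|_{s=0}=-(n-1)!$) and then reassemble the Euler integral through the beta integral, whereas the paper substitutes the Euler representation first so that $\partial_s (1-zx)^s\big|_{s=0}=\log(1-zx)$ yields (\ref{3.7}) directly.
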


\begin{proof}
This follows from the stated strategy, upon use of the Euler integral form of ${}_2F_1$.
\end{proof}

The integral formula (\ref{3.7}) is well suited to an asymptotic analysis.

\begin{corollary}
Consider (\ref{3.7}). For $N \to \infty$ with $N_0/N$ fixed, $b_2/b_1$ fixed, we have
\begin{equation}\label{a1}
\mu_1 \sim {1 \over 2} \bigg ( \Psi(\beta N/2) - \log b_2 +
\log \Big ( 1 - \Big ( 1 - {b_2 \over b_1}  \Big )  {N_0 \over N}\Big ) \bigg ).
\end{equation}
If in addition $b_1, b_2$ are proportional to $N$, this further simplifies to give
\begin{equation}\label{a2}
\mu_1 \sim
{1 \over 2} \log \Big ( {\beta \over 2} \Big ( {N-N_0 \over b_2} + {N_0 \over b_1} \Big ) \Big ).
\end{equation}
In the case that $N_0 = N- 1$, $b_1$ is proportional to $N$ and $b_2$ is fixed, we have
\begin{equation}\label{a3}
\mu_1 \sim {1 \over 2} \bigg (
\log {\beta N \over 2 b_1}  + {(\beta/2)^{\beta/2} \over \Gamma(\beta / 2)} \int_0^\infty e^{-\beta x/2} x^{\beta/2 - 1} \log \Big ( 1 + {b_1 \over N b_2} x
\Big ) \, dx \bigg ).
\end{equation}
\end{corollary}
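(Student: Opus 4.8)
The plan is to recognise the integral in (\ref{3.7}) as the expectation of a fixed smooth function against a beta law, and then to treat the three regimes as different limits of that expectation. Writing $a=\beta N_0/2$ and $b=\beta(N-N_0)/2$, the prefactor $\Gamma(\beta N/2)/(\Gamma(a)\Gamma(b))$ is precisely $1/B(a,b)$ with $B$ the beta function, so that
\[
I:=\frac{\Gamma(\beta N/2)}{\Gamma(\beta N_0/2)\Gamma(\beta(N-N_0)/2)}\int_0^1 x^{a-1}(1-x)^{b-1}\log\!\Big(1-\big(1-\tfrac{b_2}{b_1}\big)x\Big)\,dx=\E\big[g(\xi)\big],
\]
where $\xi\sim\mathrm{Beta}[a,b]$ and $g(x)=\log(1-(1-b_2/b_1)x)$. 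Since the rows are ordered so that $b_2\leq b_1$, the argument of $g$ lies in $[b_2/b_1,1]\subset(0,1]$ for $x\in[0,1]$, so $g$ is smooth, bounded, and Lipschitz on $[0,1]$; moreover the mean of $\xi$ is exactly $a/(a+b)=N_0/N$.

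For (\ref{a1}), in the regime $N\to\infty$ with $N_0/N$ and $b_2/b_1$ fixed, both $a$ and $b$ grow linearly in $N$ while the mean $N_0/N$ is held fixed and $\mathrm{Var}\,\xi=ab/((a+b)^2(a+b+1))=O(1/N)$. Thus $\xi$ concentrates at $N_0/N$, and the Lipschitz bound $|\E g(\xi)-g(N_0/N)|\leq L\,\E|\xi-N_0/N|\leq L(\mathrm{Var}\,\xi)^{1/2}\to0$ gives $I\to g(N_0/N)$, which is (\ref{a1}); equivalently one applies Laplace's method to the integral, whose integrand has a sharp interior maximum near $x=N_0/N$. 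To pass from (\ref{a1}) to (\ref{a2}) I would insert the digamma asymptotic $\Psi(\beta N/2)=\log(\beta N/2)+O(1/N)$ and take $b_1,b_2$ proportional to $N$. Collecting logarithms via the elementary identity $1-(1-b_2/b_1)N_0/N=(b_2/N)\big(\tfrac{N-N_0}{b_2}+\tfrac{N_0}{b_1}\big)$, the term $-\log b_2$ cancels $\log b_2$, the term $\log N$ cancels, and $\log(\beta N/2)-\log N=\log(\beta/2)$, leaving exactly (\ref{a2}).

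The regime (\ref{a3}) is an edge rather than a bulk limit: now $N_0=N-1$, so $b=\beta/2$ is fixed while $a=\beta(N-1)/2\to\infty$, and $\xi$ concentrates at the endpoint $x=1$. I would first split $g(x)=\log(b_2/b_1)+\log\!\big(1+\tfrac{b_1-b_2}{b_2}(1-x)\big)$; the constant $\log(b_2/b_1)=\log b_2-\log b_1$ combines with $-\log b_2$ and with $\Psi(\beta N/2)\sim\log(\beta N/2)$ to produce the prefactor $\tfrac12\log(\beta N/2b_1)$. For the remaining expectation I rescale at the edge by setting $v=a(1-\xi)$: the standard fact that $\mathrm{Beta}[a,b]$ with $a\to\infty$ and $b$ fixed satisfies $a(1-\xi)\to\mathrm{Gamma}(b,1)$ in distribution (checked from $1/B(a,b)\sim a^b/\Gamma(b)$ and $(1-v/a)^{a-1}\to e^{-v}$) sends the beta density to the $\mathrm{Gamma}(\beta/2,1)$ density, and the further substitution $v=(\beta/2)x$ converts this to the $\mathrm{Gamma}(\beta/2,\beta/2)$ density $\tfrac{(\beta/2)^{\beta/2}}{\Gamma(\beta/2)}e^{-\beta x/2}x^{\beta/2-1}$ appearing in (\ref{a3}). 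Simultaneously, using $a\sim\beta N/2$ and $b_2$ fixed, $\tfrac{b_1-b_2}{b_2}(1-\xi)=\tfrac{b_1-b_2}{b_2}\tfrac{v}{a}\to\tfrac{b_1}{Nb_2}x$, producing precisely the integrand $\log(1+\tfrac{b_1}{Nb_2}x)$.

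The main obstacle is the justification of passing to the limit under the integral sign in the edge regime (\ref{a3}). Unlike (\ref{a1}), the function being averaged is $\log(1+\tfrac{b_1-b_2}{b_2}(1-\xi))$, which is unbounded as $N\to\infty$ because $b_1\propto N$, so mere convergence in distribution of the rescaled variable does not suffice. To close this I would produce an $N$-uniform integrable majorant for the rescaled integrand: on the support $v\in[0,a]$ one has $(1-v/a)^{a-1}\leq e^{-v/2}$ for $a\geq2$, the factor $\Gamma(a+b)/(\Gamma(a)a^{b})$ is bounded, and $\log(1+\tfrac{b_1-b_2}{b_2}\tfrac{v}{a})\leq Cv$ with $C$ uniform in $N$; hence the integrand is dominated by a multiple of $v^{\beta/2}e^{-v/2}$, which is integrable on $(0,\infty)$, and dominated convergence applies. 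The bulk estimate (\ref{a1}) needs only boundedness of $g$, and (\ref{a2}) is then purely algebraic, so this edge domination is the sole analytic subtlety.
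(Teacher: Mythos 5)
Your proposal is correct and follows essentially the same route as the paper: a Laplace/concentration argument at the interior maximum $x=N_0/N$ of the beta density for (\ref{a1}), the digamma asymptotic $\Psi(\beta N/2)\sim\log(\beta N/2)$ plus the logarithm algebra for (\ref{a2}), and the edge rescaling of ${\rm Beta}[\beta(N-1)/2,\beta/2]$ to the gamma density (the paper's change of variables $x\mapsto x/N$ after $x\mapsto 1-x$) for (\ref{a3}). The only substantive difference is that you supply the $N$-uniform integrable majorant justifying the limit under the integral in the edge regime, a step the paper's terse ``taking $N$ large'' leaves implicit.
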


\begin{proof}
In the setting of (\ref{a1}), the integrand in (\ref{3.7}) has it maximum value at $x = N_0/N$. Since the
ratio of gamma functions prefactor is just the reciprocal of the same integral without the log term,
(\ref{a1}) follows.  In the latter, using the fact that for large
$x$, $\Psi(x) \sim x + {\rm O}(1/x)$,  gives (\ref{a2}). To obtain (\ref{a3}), we first change
variables $x \mapsto 1 - x$ in (\ref{3.7}). Manipulation of the log term in the integrand shows
\begin{multline*}
\mu_1 = {1 \over 2} \bigg (
\Psi(\beta N/2) - \log b_1 +
{\Gamma(\beta N/2) \over \Gamma(\beta N_0/2) \Gamma(\beta(N-N_0)/2)} \\
\times
\int_0^1 x^{\beta (N - N_0)/2 -1} (1 - x)^{\beta (N_0 /2 - 1}
\log \bigg ( 1 - \Big ( 1 - {b_1 \over b_2} \Big ) x  \bigg ) \, dx \bigg ).
\end{multline*}
Again, the ratio of gamma function prefactor can be written as the same integral without the log term.
Rewriting as such, and setting $N_0 = 1$, the asymptotic form follows by changing variables 
$x \mapsto x/N$, and taking $N$ large.
\end{proof}

\begin{remark}
The matrix $\Sigma$ in (\ref{GK}), in the setting of (\ref{sb}), has for the sum of its diagonal entries
$$
{\rm Tr} \, \Sigma = {N - N_0 \over 2 b_2} + {N_0 \over 2 b_1}.
$$
Thus (\ref{a2}) can be rewritten
\begin{equation}\label{a2a}
\mu_1 \sim {1 \over 2} \log ( \beta {\rm Tr} \, \Sigma).
\end{equation}
This has been derived previously in \cite{Ka14} (the additional factor of $\beta$ in the logarithm on the LHS of (\ref{a2a}) is
due to our use of standard real Gaussians, whereas in \cite{Ka14}, standard complex (quaternion)
Gaussians are used for $\beta = 2$ ($\beta = 4$)). With $S_N$ defined above (\ref{sb}),
another viewpoint on (\ref{a2a}) is that to leading order for large $N$,
\begin{equation}\label{SN}
\langle \log S_N \rangle \sim
\log \langle S_N \rangle.
\end{equation}

For $\beta = 2$ an evaluation easily seen to be equivalent to (\ref{a3}) with $\beta = 2$ is also given in
\cite[Eq.~(2)]{Ka14}. In addition, the latter reference also treats the case
$\beta = 1$, where $2b_1/N = 1$, $2 b_2 = t$, and we read off from Thm.~1.3 therein the
formula
$$
\lim_{N \to \infty} \mu_1 = {1 \over 2} e^{t/2} \int_1^\infty e^{-tx/2} {dx \over \sqrt{x} (\sqrt{x} + 1)}.
$$
Numerical evaluation indicates that our formula is equivalent to this, although we
don't have a direct proof.

\end{remark}

The evaluation (\ref{3.7}) of the largest Lyapunov exponent for products of independent Gaussian random
matrices (\ref{GK}), with variances given by (\ref{sb}), is based on the explicit evaluation (\ref{3.5})
of the PDF $p(t)$ in (\ref{m1}). It is an elementary fact that in the general variance case $\hat{p}(k)$,
the Fourier transform of $p(t)$, can be evaluated explicitly. From this starting point,
and with the use of complex analysis, Kargin \cite{Ka14}
has obtained an integral evaluation of the largest Lyapunov exponent in the case of general variances
$1/2\sigma_i^2 = b_i$,
\begin{align}\label{K1}
2 \mu_1 & = - \gamma + \log 2 +
\int_0^\infty \bigg ( \chi_{x \in (0,1)} - \prod_{l=1}^N \Big ( 1 + {x \over 2 b_l} \Big )^{-\beta/2} 
\bigg ) {dx \over x}, \nonumber \\
& = -\gamma +
\int_0^\infty \bigg ( \chi_{x \in (0,1)} - \prod_{l=1}^N \Big ( 1 + {x \over  b_l} \Big )^{-\beta/2} 
\bigg ) {dx \over x}
\end{align}
where $\gamma$ denotes Euler's constant and $\chi_J = 1$ for $J$ true,
$\chi_J =0$ otherwise (we have added $\log \beta$ to \cite[Eq.~(1)]{Ka14} due to
our use of standard real Gaussians; recall the discussion below (\ref{a2a})). Note that in the special case
(\ref{sb}), (\ref{K1}) remains distinct from (\ref{3.7}). Here we provide a derivation of
(\ref{K1}) that is independent of complex analysis, and thus distinct from that in \cite{Ka14}.

\noindent
{\it Proof of (\ref{K1}).} \: 
In \cite{ZD88}, the exact formula (\ref{Z}) below was given for the
largest Lyapunov exponent in the case of
the product of independent matrices of the form $\mathbb I_2 + {1 \over c} G$,
where $G$ is a $2 \times 2$ standard Gaussian matrix.
This calculation was based on the particular integral form of the logarithm function
\begin{equation}\label{lg}
\log y = \int_0^\infty {e^{-t} - e^{-ty} \over t} \, dt, \qquad y > 0.
 \end{equation}
 
 Applying (\ref{lg}) to the present setting of real, complex or quaternion standard Gaussian
 entries along each row, with all independent parts distributed as N$[0,\sqrt{1/(2b_l)}]$
 (i.e.~a mean zero normal with standard deviation $\sqrt{1/(2b_l)}$), shows
\begin{equation}\label{1.an} 
2 \mu_1 = \int_0^\infty \Big ( e^{-t} - \prod_{l=1}^N \prod_{s=1}^\beta \langle e^{-t (x_l^{(s)})^2}
\rangle_{x_l^{(s)} \in {\rm N}[0,\sqrt{1/(2b_l)}]} \Big ) \, {dt \over t}.
\end{equation}
Computing the average is elementary, reducing (\ref{1.an}) to
\begin{equation}\label{1.ana}
2 \mu_1 = \int_0^\infty \Big ( e^{-t} - \prod_{l=1}^N \Big ( 1 + {t \over b_l} \Big )^{-\beta/2} \Big )
\, {dt \over t}.
\end{equation}
Upon minor manipulation, this reclaims (\ref{K1}). \hfill $\square$

\subsection{Beta type I distribution}
Here we consider the case that each row of the random matrix has beta type I distribution (\ref{dw.2}) with $\nu \mapsto \nu_l$,  appropriately generalised to allow for complex or quaternion entries.
An evaluation of
 the Lyapunov exponent as an infinite series can be given.

\begin{proposition}\label{p10}
In the above specified setting,
\begin{equation}\label{pq1}
\mu_1 ={1 \over 2} \sum_{n=-\infty}^\infty
c_n \, \int_0^{\beta N} (\log x) e^{2 \pi i n x/\beta N} \, dx,
\end{equation}
where
\begin{equation}\label{pq2}
c_n = {1 \over \beta N} \prod_{l=1}^N 
 \, {}_1 F_1(\beta/2,\beta/2+\alpha_l;-2 \pi i n/\beta N) ,
\end{equation}
with $\alpha_l = \beta (N + \nu_l - 1)/2 $.
\end{proposition}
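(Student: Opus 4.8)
The plan is to compute the PDF $p(t)$ of $S_N = \sum_{l=1}^N |x_{l,1}|^2$ appearing in \eqref{m1}, where each $|x_{l,1}|^2$ is distributed according to the marginal beta type I weight \eqref{dw.2}, and then to extract a Fourier-series representation for $p(t)$ on the interval where it is supported. The key structural fact is that each summand $|x_{l,1}|^2$ is supported on $(0,1)$, so $S_N$ is supported on $(0,N)$; more precisely, with the $\beta$-dependent scaling built into \eqref{m1}, the relevant support is $(0,\beta N)$. A distribution supported on a finite interval admits a Fourier-series expansion, and the coefficients $c_n$ in \eqref{pq2} should be exactly the Fourier coefficients of $p$ on $(0,\beta N)$. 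Thus the heart of the proof is to identify $\widehat p$ at the discrete frequencies $2\pi n/\beta N$ and recognise the resulting products of confluent hypergeometric functions.

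First I would record, from \eqref{dw.2}, the density of a single squared entry: changing variables $|x|^2 = u$ in $(1-|x|^2)^{\alpha_l - 1}$ (with the correct Jacobian in each number system producing the factor $u^{\beta/2 - 1}$) shows $|x_{l,1}|^2$ has a $\mathrm{Beta}[\beta/2,\alpha_l - \beta/2]$-type density on $(0,1)$, with $\alpha_l = \beta(N+\nu_l-1)/2$ as stated. Next I would compute the characteristic function of each summand. The characteristic function of a beta-distributed variable on $(0,1)$ is a Kummer confluent hypergeometric function: $\langle e^{i s u}\rangle$ for $u \sim \mathrm{Beta}[\beta/2,\alpha_l-\beta/2]$ equals ${}_1F_1(\beta/2;\,\beta/2+(\alpha_l - \beta/2);\, i s)$, and here the second parameter collapses to $\beta/2 + \alpha_l$ precisely because of how $\alpha_l$ is defined — this is the calculation that reproduces the arguments appearing in \eqref{pq2}. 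Multiplying over the $N$ independent rows gives the characteristic function of $S_N$ as $\prod_{l=1}^N {}_1F_1(\beta/2,\beta/2+\alpha_l; i s)$.

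Then I would invoke the Fourier-series inversion. Since $p$ is supported on $[0,\beta N]$, write $p(t) = \sum_{n=-\infty}^\infty c_n\, e^{2\pi i n t/\beta N}$ with $c_n = \frac{1}{\beta N}\int_0^{\beta N} p(t)\, e^{-2\pi i n t/\beta N}\, dt$, which is exactly the value of the characteristic function at frequency $s = -2\pi n/\beta N$, divided by $\beta N$. This yields \eqref{pq2} after checking that the scaling of the argument $-2\pi i n/\beta N$ matches (one must be careful that the $\beta$ inside the support and the $\beta$ inside the summand variance are consistent). Substituting the series for $p$ into \eqref{m1} and interchanging sum and integral gives
\begin{equation*}
\mu_1 = \frac{1}{2}\int_0^{\beta N}(\log t)\, p(t)\, dt
= \frac{1}{2}\sum_{n=-\infty}^\infty c_n \int_0^{\beta N}(\log t)\, e^{2\pi i n t/\beta N}\, dt,
\end{equation*}
which is \eqref{pq1}.

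The main obstacle is justifying the term-by-term interchange of the infinite Fourier sum with the $\int_0^{\beta N}(\log t)\,dt$ integration in \eqref{m1}. The Fourier coefficients $c_n$ decay only polynomially (the confluent hypergeometric factors decay like a power of $n$), and $\log t$ is integrable but singular at $t=0$, so convergence is conditional rather than absolute; I would control this either by a summation-by-parts / Abel argument using the decay rate of the $c_n$, or by appealing to the fact that $p(t)$ is continuous (indeed smooth once $\beta N/2 > 1$) on the open interval, so its Fourier series converges pointwise and the pairing against the fixed integrable weight $\chi_{(0,\beta N)}\log t$ is legitimate by dominated convergence applied to the partial sums. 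A secondary check, which I would verify as a consistency test rather than a difficulty, is that the $n=0$ term $c_0 = 1/\beta N$ reproduces the leading $\frac{1}{2}\cdot\frac{1}{\beta N}\int_0^{\beta N}\log t\,dt$ contribution consistent with the known value of $\mu_1$ in the uniform ($\nu_l \to 0$) limit.
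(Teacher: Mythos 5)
Your proposal follows essentially the same route as the paper's proof: represent $p$ by a Fourier series on $(0,\beta N)$, identify the coefficients $c_n$ as values of the characteristic function of $S_N$ at the discrete frequencies $-2\pi n/\beta N$ (a product over rows by independence), recognise each factor as a Kummer function via its Euler integral representation, and substitute the series into (\ref{m1}). The paper performs the single-row integral directly in polar coordinates rather than via the one-dimensional beta variable $u=|x|^2$, but that is the same computation; your explicit attention to the sum--integral interchange is a genuine addition, since the paper interchanges silently (and indeed the factors ${}_1F_1(\beta/2,\beta/2+\alpha_l;-it)$ decay like $|t|^{-\beta/2}$, so $c_n = O(|n|^{-\beta N/2})$ and dominated convergence applies outright once $\beta N>2$).

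However, there is a localized algebraic slip you must repair, because as written your derivation contradicts (\ref{pq2}). From (\ref{dw.2}), the change of variables $u=|x|^2$ with Jacobian factor $u^{\beta/2-1}$ gives the density of $u$ proportional to $u^{\beta/2-1}(1-u)^{\alpha_l-1}$ on $(0,1)$, i.e.\ $u \sim \mathrm{Beta}[\beta/2,\alpha_l]$ --- not $\mathrm{Beta}[\beta/2,\alpha_l-\beta/2]$ as you state. Kummer's formula $\langle e^{isu}\rangle = {}_1F_1(a;a+b;is)$ for $u\sim\mathrm{Beta}[a,b]$ then yields second parameter $\beta/2+\alpha_l$ directly, matching (\ref{pq2}). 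With your parameters the formula would give ${}_1F_1(\beta/2;\alpha_l;is)$, and your claimed ``collapse'' $\beta/2+(\alpha_l-\beta/2)=\beta/2+\alpha_l$ is false (the left side equals $\alpha_l$). A second, minor point: the tight support of $S_N$ is $(0,N)$, and there is no ``$\beta$-dependent scaling built into (\ref{m1})''; the expansion on $(0,\beta N)$ is nonetheless legitimate simply because $(0,N)\subseteq(0,\beta N)$, which is all the argument needs (the paper's own phrasing is loose on the same point). With the beta parameter corrected, your proof is sound and coincides in substance with the paper's.
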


\begin{proof}
Generally, in the setting that each $x_{l}$ has PDF $q_l(x)$, the Fourier transform
$\hat{p}(k)$ of the PDF for the distribution of $S_N := \sum_{l=1}^N 
|x_{l}|^2$ is, according to the convolution formula, given by
\begin{equation}\label{pq}
\hat{p}(k) = \prod_{l=1}^N  \int_{\mathbb R^\beta} e^{i k |x|^2} q_l(x) \, dx^{(1)}
\cdots d x^{(\beta)} .
\end{equation}
For $q_l(x)$ equal to (\ref{dw.2}) with $\nu \mapsto \nu_l$, $q_l(x)$ is supported on
$|x| < 1$ and so $p(x)$ is supported on $(0,\beta N)$. Thus for $x \in (0,\beta N)$ we have the
Fourier series
$$
p(x) = \sum_{n = - \infty}^\infty c_n e^{2 \pi i n x/\beta N}, \qquad
c_n = {1 \over \beta N} \int_0^{\beta N} p(x) e^{-2 \pi i x n/\beta N} \, dx.
$$
Making use of (\ref{pq}) it follows that
$$
c_n = {1 \over \beta N} \prod_{l=1}^N 
 \int_{|x| < 1} e^{-2 \pi i |x|^2 n/\beta N} q_l(x) \, dx^{(1)}
\cdots d x^{(\beta)} .
$$
Substituting (\ref{dw.2}) for $q_l(x)$, and changing variables to polar coordinates,
an integral
representation of the confluent hypergeometric function results, and
shows the integral can be evaluated to give (\ref{pq2}).
Recalling (\ref{m1}), (\ref{pq1}) follows.
\end{proof}

Use of the integral form of the logarithm (\ref{lg}) allows for an alternative 
expression for the Lyapunov exponent, which is the analogue of (\ref{1.ana}).

\begin{proposition}
An alternative expression to the result of Proposition \ref{p10} is the formula
\begin{equation}\label{0s}
2 \mu_1 =
\int_0^\infty \bigg ( e^{-t}-
\prod_{l=1}^N   {}_1F_1(\beta/2,\beta/2+\alpha_l;-t) 
\bigg ) {dt \over t}.
\end{equation}
\end{proposition}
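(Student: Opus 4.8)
The plan is to follow the template used to derive (\ref{1.ana}), replacing the Gaussian single-entry average by its beta type I counterpart. I would begin from the representation (\ref{m1}) of $\mu_1$ in terms of the PDF $p(t)$ of $S_N = \sum_{l=1}^N |x_{l,1}|^2$, so that $2\mu_1 = \langle \log S_N \rangle$, and insert the integral form of the logarithm (\ref{lg}) with $y = S_N$. Interchanging the order of integration---justified exactly as in the derivation of (\ref{1.ana}), and here made especially clean by the fact (noted in the proof of Proposition \ref{p10}) that $p$ is supported on the bounded interval $(0,\beta N)$---reduces the inner integral to the Laplace transform $\langle e^{-t S_N}\rangle$, giving
\begin{equation*}
2\mu_1 = \int_0^\infty \Big( e^{-t} - \langle e^{-t S_N}\rangle \Big)\,\frac{dt}{t}.
\end{equation*}
Since distinct rows of $X$ are independent, the first-column entries $x_{l,1}$ are independent, whence $\langle e^{-t S_N}\rangle = \prod_{l=1}^N \langle e^{-t|x_{l,1}|^2}\rangle$.

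The only genuinely new ingredient is the single-entry average $\langle e^{-t|x|^2}\rangle$ for the marginal (\ref{dw.2}). Unlike the Gaussian case, the $\beta$ real components of one entry are coupled through the constraint $|x|<1$ and so cannot be factored over the components; instead I would evaluate the $\beta$-dimensional integral directly. Passing to radial coordinates on $\mathbb{R}^\beta$ and substituting $u = r^2$ turns the angular integration into the surface area of the unit sphere and reduces the average to
\begin{equation*}
\langle e^{-t|x|^2}\rangle = \frac{\Gamma(\alpha_l + \beta/2)}{\Gamma(\alpha_l)\,\Gamma(\beta/2)}\int_0^1 e^{-tu}\,u^{\beta/2 - 1}(1-u)^{\alpha_l - 1}\,du,
\end{equation*}
with $\alpha_l = \beta(N + \nu_l - 1)/2$ as in Proposition \ref{p10}.

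The key step is to recognise this as a confluent hypergeometric function via the Euler integral representation
\begin{equation*}
{}_1F_1(a;c;z) = \frac{\Gamma(c)}{\Gamma(a)\,\Gamma(c-a)}\int_0^1 e^{zu}\,u^{a-1}(1-u)^{c-a-1}\,du.
\end{equation*}
Taking $a = \beta/2$, $c = \beta/2 + \alpha_l$ and $z = -t$ matches both the exponents and the prefactor, so $\langle e^{-t|x|^2}\rangle = {}_1F_1(\beta/2, \beta/2 + \alpha_l; -t)$. Substituting the product of these averages into the displayed formula for $2\mu_1$ then yields (\ref{0s}).

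I expect the main obstacle to be bookkeeping rather than conceptual: one must track the normalisation constant of (\ref{dw.2}) through the radial reduction so that it combines precisely with the Jacobian and the sphere's surface area into the Beta-function prefactor of the ${}_1F_1$ integral representation. It is also worth confirming convergence of the final integral---near $t=0$ both $e^{-t}$ and each ${}_1F_1$ tend to $1$, so the bracket is $O(t)$ and the integrand is bounded, while for large $t$ each factor decays like $t^{-\beta/2}$ by the standard asymptotics of ${}_1F_1$ at large negative argument, leaving the product integrable against $dt/t$---but this parallels the Gaussian case and is routine.
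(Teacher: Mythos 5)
Your proposal is correct and takes essentially the same route as the paper's proof: the logarithm identity (\ref{lg}) reduces $2\mu_1$ to $\int_0^\infty \big( e^{-t} - \langle e^{-tS_N}\rangle\big)\,dt/t$ with the Laplace transform factorising over independent rows, and the single-entry average is evaluated in polar coordinates and recognised as the Euler integral form of ${}_1F_1(\beta/2;\beta/2+\alpha_l;-t)$, exactly the identity (\ref{os}) on which the paper rests. You merely make explicit the bookkeeping (normalisation constant, sphere surface area, substitution $u=r^2$) and the convergence check that the paper leaves implicit.
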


\begin{proof}
The essential point is that for $p(x)$ given by (\ref{dw.2}),
\begin{equation}\label{os}
\langle e^{-t |x|^2} \rangle_{x \in p(x)} = \int_{|x| < 1} 
e^{-t |x|^2} p(x) \, dx^{(1)} \cdots d x^{(\beta)} =
 {}_1F_1(\beta/2,\beta/2+\alpha_l;-t) .
\end{equation}
This follows from the use of polar coordinates, and recognition of the resulting
expression as a standard integral form of ${}_1 F_1$.
\end{proof}

We note from (\ref{CN}) with $\beta = 1$ that the choices $(N,\nu) = (2,1)$, and
$(N,\nu) = (3,0)$, give the marginal distribution of each entry as uniform on $(-1,1)$.
Furthermore, entries in different rows are independent, so the PDF $p(t)$ in
(\ref{m1}) is then the PDF for $\sum_{i=1}^N U_i^2$, where each $U_i$ is an
independent random variable uniformly distributed on $(0,1)$. For general $N$, this
random sum has been the topic of the recent works \cite{We17,Fo18}, and for $N=2$
arose in the recent study \cite{FZ18}.

\subsection{Beta type II distribution}

As for the Gaussian and beta type I cases, the identity (\ref{lg}) again quickly leads
to an explicit integral formula for the largest Lyapunov exponent in the
case of beta type II distribution.

\begin{proposition}\label{P9}
Specify that each row of the random matrix has  beta type II distribution (\ref{dw.3}) with $\omega \mapsto \omega_l$,  appropriately generalised to allow for complex or quaternion entries. The corresponding
Lyapunov exponent is given by
\begin{equation}\label{1s}
2 \mu_1 = 
\int_0^\infty \bigg ( e^{-t} -
\prod_{l=1}^N \Big ( {\Gamma(\omega_l + \beta/2) \over \Gamma(\omega_l/2)} 
U(\beta/2,1-\omega_l/2;t)  \bigg ) {dt \over t},
\end{equation}
where $U(a,b;x)$ denotes the confluent
hypergeometric function of the second kind.
\end{proposition}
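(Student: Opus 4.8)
The plan is to follow the route already used for the Gaussian formula (\ref{1.ana}) and the beta type I formula (\ref{0s}); the only new ingredient is the single-entry Laplace transform attached to the marginal (\ref{dw.3}). Starting from (\ref{m1}), I would insert the integral representation (\ref{lg}) of the logarithm, with its argument set equal to the integration variable of (\ref{m1}), and interchange the two integrations. Since $p$ is a probability density, this turns $\mu_1 = \tfrac12 \langle \log S_N\rangle$ (with $S_N = \sum_{l=1}^N |x_{l,1}|^2$) into
\[
2\mu_1 = \int_0^\infty \Big( e^{-t} - \big\langle e^{-t S_N}\big\rangle\Big)\,\frac{dt}{t}.
\]
Because entries in different rows are independent, the expectation factorises as $\big\langle e^{-tS_N}\big\rangle = \prod_{l=1}^N \big\langle e^{-t|x_l|^2}\big\rangle$, so the whole problem reduces to the single-entry average against (\ref{dw.3}).

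To evaluate that average I would pass to polar coordinates in $\mathbb{R}^\beta$ and substitute $u=r^2$, obtaining
\[
\big\langle e^{-t|x|^2}\big\rangle = \frac{\Gamma((\omega_l+\beta)/2)}{\Gamma(\omega_l/2)\,\Gamma(\beta/2)} \int_0^\infty e^{-tu}\,u^{\beta/2-1}(1+u)^{-(\omega_l+\beta)/2}\,du,
\]
where the prefactor collects the normalisation constant of (\ref{dw.3}) together with the surface area $2\pi^{\beta/2}/\Gamma(\beta/2)$ of the unit sphere in $\mathbb{R}^\beta$ and the Jacobian factor $\tfrac12$. The remaining integral is a standard Euler-type representation of the confluent hypergeometric function of the second kind: matching the exponent $\beta/2-1$ of $u$ to $a-1$ gives $a=\beta/2$, and matching the exponent $-(\omega_l+\beta)/2$ of $1+u$ to $b-a-1$ gives $b = 1-\omega_l/2$, so the integral equals $\Gamma(\beta/2)\,U(\beta/2,1-\omega_l/2;t)$. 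Hence the single-entry average is $\frac{\Gamma((\omega_l+\beta)/2)}{\Gamma(\omega_l/2)}\,U(\beta/2,1-\omega_l/2;t)$, and inserting this into the displayed formula for $2\mu_1$ and taking the product over $l$ reproduces (\ref{1s}).

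The computation itself is routine; the only point requiring care is the interchange of integrations, which I would justify by checking that the integrand of the formula for $2\mu_1$ is absolutely integrable at both endpoints. As $t\to\infty$ the asymptotics $U(\beta/2,1-\omega_l/2;t)\sim t^{-\beta/2}$ make the product decay like $t^{-\beta N/2}$, so after division by $t$ the integrand decays like $t^{-\beta N/2-1}$ and is integrable (while $e^{-t}$ decays exponentially). Near $t=0$ the small-argument expansion of $U$ contributes a correction of order $t^{\omega_l/2}$ to each factor, so the bracketed quantity vanishes at the rate $t^{\omega_{\min}/2}$ with $\omega_{\min}=\min_l \omega_l$, and the integrand behaves like $t^{\omega_{\min}/2-1}$, which is integrable precisely because each $\omega_l>0$. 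This endpoint analysis is the main—though mild—obstacle; every remaining step parallels the already-established beta type I case.
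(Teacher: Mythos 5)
Your proposal is correct and follows essentially the same route as the paper's own proof: apply the integral representation (\ref{lg}) of the logarithm to (\ref{m1}), factorise $\langle e^{-t S_N}\rangle$ over the independent rows, and evaluate each single-entry average in polar coordinates by recognising the standard Euler integral for $U(a,b;t)$, exactly as in (\ref{os1}) (your explicit endpoint analysis justifying the interchange of integrals is a welcome addition the paper omits). Note moreover that your prefactor $\Gamma((\omega_l+\beta)/2)/\Gamma(\omega_l/2)$ is in fact the correct one --- it is consistent with the normalisation constant in (\ref{dw.3}) and with $\langle e^{-t|x|^2}\rangle \to 1$ as $t \to 0^+$ --- so the factor $\Gamma(\omega_l+\beta/2)$ printed in (\ref{1s}) and (\ref{os1}) is a typo for $\Gamma((\omega_l+\beta)/2)$.
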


\begin{proof}
Analogous to (\ref{os}), for $p(x)$ given by (\ref{dw.3}),
\begin{equation}\label{os1}
\langle e^{-t |x|^2} \rangle_{x \in p(x)} = \int_{\mathbb R^\beta} 
e^{-t |x|^2} p(x) \, dx^{(1)} \cdots d x^{(\beta)} =  {\Gamma(\omega_l + \beta/2) \over \Gamma(\omega/2)} 
U(\beta/2,1-\omega_l/2;t),
\end{equation}
where again polar coordinates are used, and a standard integral form of $U(a,b;x)$ is recognised
to obtain the final equality.
\end{proof}

\begin{remark}
The structural similarity between (\ref{1s}), (\ref{0s}) and (\ref{K1}) is evident.
In fact the latter can be reclaimed from either of (\ref{1s}) or (\ref{0s}). Consider for definiteness the former.
Set $\omega_l = 2 b_l L$. From the integral form of $U(a,b;x)$ as implicit in
(\ref{os1}),
we see that as $L \to \infty$
$$
 {\Gamma(\omega_l + \beta/2) \over \Gamma(\omega_l/2)} 
U(\beta/2,1-\omega_l/2;x) \to {1 \over (1 + x/b_l)^{\beta/2}}
$$
and thus $2 \mu_1 + \log L$ tends to (\ref{K1}).

\end{remark}

\begin{remark}
There is an analogue of the large $N$ result (\ref{a2a}) in both the type I and type II beta cases,
$$
\mu_1 \sim {1 \over 2} \log \sum_{j=1}^N (1 + \nu_j)^{-1}, \qquad
\mu_1 \sim {1 \over 2} \log \sum_{j=1}^N (\omega_j - 2/\beta)^{-1},
$$
respectively. This is a consequence of (\ref{SN}).
\end{remark}

\begin{remark}
As a part of Remark \ref{R4}, methods to sample an element in the first column of $X$, $x_{l,1}$
say, in each of the 3 cases under consideration --- rows of $X$ sampled according to the PDFs
and their extension to their extension to the complex and quaterion cases --- was given.
This then allows for a Monte Carlo approximation to (\ref{m1}) according to
$$
\mu_1 \approx {1 \over 2 M} \sum_{j=1}^M \log \Big ( \sum_{l=1}^N | x_{l,1}^{(j)} |^2 \Big ),
$$
where $ x_{l,1}^{(j)}$ denotes the $j$-th sample of $x_{l,1}$ and $M$ is the total number of
times the column is sampled. This allows for (\ref{1.ana}), (\ref{0s}) and (\ref{1s}) to be checked
numerically; agreement is found although the integrand in (\ref{0s}) typically requires high precision
evaluation to obtain numerical stability.
\end{remark}



\section{Noncentral Wishart matrices}
In \S \ref{S2.2} the random matrix $Y = cI_N + X$, where $X$ has i.i.d.~real Gaussian entries
N$[0,\sigma]$, was introduced, as were the complex and quaternion analogues. It has been
observed in \cite{ZD88} that the invariance (\ref{4a}) implies the simple formula (\ref{6}) ---
with the matrix $X$ replaced by $Y$ --- for the sum of the Lyapunov exponents. Thus in the case
$k=N$, for $c$ large the asymptotic formula (\ref{2.28}) holds true. A similar strategy
allows for the derivation of the corresponding result in the general $k$ case.

Let $Y_k$ denote first $k$ columns of the matrix $Y$.
The matrix $W_k:=Y_k^\dagger Y_k$ is, as for the case $k=N$, an example of a non-central Wishart
matrix. From the work of Constantine \cite{Co63},
generalised to the complex case in \cite[Eq.~(21)]{CZ05}, with $\tilde{c} = c^2/2 \sigma^2$ we have
\begin{align}\label{FW1}
\Big \langle (\det W_k)^\alpha \Big \rangle & =
\prod_{l=1}^k (2 \sigma^2)^\alpha (\beta (N-l+1)/2)_\alpha \,
e^{-k \tilde{c}} \, {}_1^{} F_1^{(2/\beta)}(\alpha + \beta N/2;\beta N/2;\tilde{c}I_k) \nonumber \\
& = \prod_{l=1}^k (2 \sigma^2)^\alpha (\beta(N-l+1)/2)_\alpha \,
{}_1^{} F_1^{(2/\beta)}(-\alpha,\beta N/2;-\tilde{c}I_k).
\end{align}
More revealing is the simplification of (\ref{FW1}) obtained upon
substitution of the large $\tilde{c}$ asymptotic expansion
 \cite[Th.~3.2]{CM76}, \cite[\S 7]{Mu78} 
 for ${}_1^{} F_1^{(2/\beta)}$ in the case $\beta = 1$ known from
  \cite[Th.~3.2]{CM76}, \cite[\S 7]{Mu78}, appropriately generalised to all
 $\beta > 0$ as noted above (\ref{2.26}). This gives as a generalisation of (\ref{2.26})
 \begin{equation}\label{2.26a}
\Big \langle (\det W_k)^\alpha \Big \rangle
 \sim {}_2^{} F_0^{(2/\beta)} \Big ( 1  - \alpha - \beta(N-k+1)/2, - \alpha; {1 \over \tilde{c}}  \mathbb I_k\Big ).
 \end{equation}
 Differentiating with respect to $\alpha$ and setting $\alpha = 0$, it then follows that
 as a generalisation of (\ref{2.28}),  to leading
 order
  \begin{equation}\label{2.28a}
\mu_1 + \cdots + \mu_k = {1 \over 2} \Big \langle
 \log   \det W_k  \Big \rangle
 \mathop{\sim}\limits_{c \to \infty} {\sigma^2 k (\beta(N-k+1)/2 -1) \over c^2}.
  \end{equation}
Notice that in the case $k=N-1$, $\beta = 1$ the coefficient on the RHS vanishes.  Moreover (\ref{2.26a}) shows
that each term in the asymptotic expansion vanishes, indicating that the large $c$ decay is in
fact exponentially fast. In the particular case of this type, $\beta = 1$, $N=2$, $k=1$ it was shown in
\cite[Eq.~(72)]{ZD88} that
 \begin{equation}\label{Z}
\mu_1 =  
 {1 \over 2} \int_{c^2/2\sigma^2}^\infty {e^{-u} \over u} \, du,
\end{equation}
which decays as a Gaussian at infinity.

Setting $c=0$ in (\ref{FW1}) shows that for $X$ a Gaussian matrix with real, complex or
quaternion entries, with parts of each entry distributed as N$[0,\sigma]$,
$$
\Big \langle \det (X^\dagger_{N \times k} X_{N \times k} ) \Big \rangle =
\prod_{l=1}^k (2 \sigma^2)^\alpha (\beta(N-l+1)/2)_\alpha.
$$
This allows the RHS of (\ref{6}) to be computed, telling us that
$$
\mu_1 + \cdots + \mu_k = {1 \over 2} \Big ( \log (2 \sigma^2) +
\sum_{l=1}^k \Psi(\beta(N-l+1)/2) \Big )
$$
(cf.~(\ref{cw.1})), as was known previously \cite{Ne86}, \cite{Fo12}.
Generalisations relating to quantifying the convergence of this Lyapunov spectrum
as a Gaussian random variable with particular variances are given in
\cite{ABK14,Ip14,Fo15}

For $k=1$ and general $N$, $\beta$, use can be made of (\ref{m1}) to get a formula for $\mu_1$ which is distinct
from that implied by (\ref{2.26a}). In the present setting the function $p(t)$ in (\ref{m1}) is the PDF for
$S_N = (\sigma/c)^2 \sum_{j=1}^N \sum_{s = 1}^\beta (x_{1j}^{(s)})^2$, where $x_{11}^{(1)}$, 
is distributed as N$[c/\sigma,1]$ and $x_{1j}^{(s)}$, $s=1,\dots,\beta$, $j=1,\dots,N$, $(s,j) \ne (1,1)$
is distributed as N$[0,1]$. After scaling out the factor of $(\sigma/c)^2$, this sum specifies the noncentral
$\chi$ squared distribution with $\beta N$ degrees of freedom and noncentrality parameter
$\lambda = (c/\sigma)^2$. From the known PDF for the latter
(see e.g.~\cite{Wiki}), we thus have
  \begin{equation}\label{2.29a}
  \mu_1 = {1 \over 2} \int_0^\infty \log  (  t/ \lambda  ) f(t;\beta N,\lambda) \, dt ,
   \end{equation} 
   where
  \begin{equation}\label{2.30a}  
    f(t;k,\lambda) = {1 \over 2} e^{-(t+\lambda)/2} \Big ( {t \over \lambda}  \Big )^{k/4 - 1/2} I_{k/2 - 1} (\sqrt{\lambda t}).
  \end{equation} 
For large $\lambda$, use of the  leading asymptotic expansion
$
I_\nu(z) \sim  e^z / (2 \pi z)^{1/2} ,
$
and expansion of the integrand about its maximum at $t = \lambda$ reclaims (\ref{2.28a}) in the
case $k=1$. The form of (\ref{2.29a}) with $\beta = 1$, $N=2$ is different to (\ref{Z}). Both are simple to compute numerically and give the same value.


\section*{Acknowledgements}
This research project is part of the program of study supported by the 
ARC Centre of Excellence for Mathematical \& Statistical Frontiers,
and the Australian Research Council Discovery Project grant DP170102028.
The work of JZ was supported by a University of Melbourne Research Scholarship.

\providecommand{\bysame}{\leavevmode\hbox to3em{\hrulefill}\thinspace}
\providecommand{\MR}{\relax\ifhmode\unskip\space\fi MR }
\providecommand{\MRhref}[2]{%
  \href{http://www.ams.org/mathscinet-getitem?mr=#1}{#2}
}
\providecommand{\href}[2]{#2}

\end{document}